\newtheorem{lemma}{Lemma}
\newtheorem{theorem}{Theorem}
\newtheorem{corollary}{Corollary}
\newcommand {\E} {\mathbb{E}}
\newcommand {\p} {\mathbb{P}}
\newcommand {\Hy} {\mathcal{H}}
\newcommand {\ed} {\mathcal{E}}
\newcommand {\R} {\mathbb{R}}
\DeclareMathOperator {\disc} {disc}
\DeclareMathOperator {\lindisc} {lindisc}
\DeclareMathOperator {\herdisc} {herdisc}
\newcommand {\ve} {\varepsilon}
\def\blfootnote{\xdef\@thefnmark{}\@footnotetext}\makeatother
\title{\bf Low-discrepancy point sets for non-uniform measures}
\author{Christoph Aistleitner} 
\address{School of Mathematics and Statistics, University of New South Wales, Sydney NSW 2052, Australia}
\email{aistleitner@math.tugraz.at}
\author{Josef Dick} 
\address{School of Mathematics and Statistics, University of New South Wales, Sydney NSW 2052, Australia}
\email{josef.dick@unsw.edu.au}
\thanks{The first author is supported by a Schr\"odinger scholarship of the Austrian Research
Foundation (FWF). The second author is supported by an Australian Research Council Queen Elizabeth 2 Fellowship.}
\subjclass[2010]{11K38, 65D30, 65C05, 62G30}
\begin{document}

\begin{abstract}
In the present paper we prove several results concerning the existence of low-discrepancy point sets with respect to an arbitrary non-uniform measure $\mu$ on the $d$-dimensional unit cube. We improve a theorem of Beck, by showing that for any $d \geq 1$, $N \geq 1,$ and any non-negative, normalized Borel measure $\mu$ on $[0,1]^d$ there exists a point set $x_1, \dots, x_N \in [0,1]^d$ whose star-discrepancy with respect to $\mu$ is of order
$$
D_N^*(x_1, \dots, x_N; \mu) \ll \frac{(\log N)^{(3d+1)/2}}{N}.
$$
For the proof we use a theorem of Banaszczyk concerning the balancing of vectors, which implies an upper bound for the linear discrepancy of hypergraphs. Furthermore, the theory of large deviation bounds for empirical processes indexed by sets is discussed, and we prove a numerically explicit upper bound for the inverse of the discrepancy for Vapnik--\v{C}ervonenkis classes. Finally, using a recent version of the Koksma--Hlawka inequality due to Brandolini, Colzani, Gigante and Travaglini, we show that our results imply the existence of cubature rules yielding fast convergence rates for the numerical integration of functions having discontinuities of a certain form.
\end{abstract}

\date{}
\maketitle

\section{Introduction} \label{sec1}

Let $x_1, \dots, x_N$ be a set of points in the $d$-dimensional unit cube $[0,1]^d$. The \emph{star-discrepancy} $D_N^*$ of $x_1, \dots, x_N$ is defined as
\begin{equation} \label{dn}
D_N^*(x_1, \dots, x_N) = \sup_{A \subset [0,1]^d} \left| \frac{1}{N} \sum_{n=1}^N \mathds{1}_A (x_n) - \lambda(A) \right|,
\end{equation}
where $\mathds{1}_A$ denotes the indicator function of $A$, $\lambda$ is the $d$-dimensional Lebesgue measure, and the supremum is extended over all axis-parallel boxes $A$ which have one vertex at the origin. By the Koksma--Hlawka inequality, for any $x_1, \dots, x_N$ and any function $f$ which has total variation $\textup{Var}~f$ on $[0,1]^d$ (in the sense of Hardy and Krause) we have
\begin{equation} \label{kokshla}
\left| \frac{1}{N}\sum_{n=1}^N f(x_n) - \int_{[0,1]^d} f(x) ~dx \right| \leq D_N^*(x_1, \dots, x_N) \cdot \textup{Var}~f.
\end{equation}
Consequently, point sets having small discrepancy can be used for numerical integration. This method is called \emph{Quasi-Monte Carlo method} (as opposed to the \emph{Monte Carlo method}, which uses randomly sampled points), and is heavily used in applications such as option pricing in financial mathematics. There exist several constructions of $d$-dimensional $N$-element point sets achieving a discrepancy of order 
\begin{equation} \label{lowd}
D_N^*(x_1, \dots, x_N) \ll \frac{(\log N)^{d-1}}{N}.
\end{equation}
Consequently, the error of Quasi-Monte Carlo integration for cleverly chosen sampling points is asymptotically significantly smaller than the (probabilistic) error of order $N^{-1/2}$ of the Monte Carlo method. For more information on discrepancy theory and the Quasi-Monte Carlo method, see for example~\cite{dpd,dts,knu}.\\

Upper bounds of the form~\eqref{lowd} are only useful if the number of points $N$ is large in comparison with the dimension $d$; in particular, the expression on the right-hand side of~\eqref{lowd} is increasing for $N \leq e^{d-1}$. To investigate low-discrepancy point sets whose cardinality is of moderate size in comparison with the dimension $d$, the notion of the \emph{inverse of the star-discrepancy} was introduced. Let $n^*(d,\ve)$ denote the smallest possible cardinality of a point set in $[0,1]^d$ achieving a star-discrepancy of at most $\ve$. Heinrich,  Novak, Wasilkowski and Wo{\'z}niakowski~\cite{hnww} proved that
\begin{equation} \label{hnwwb}
n^*(d,\ve) \leq c_{\textup{abs}} d \ve^{-2}
\end{equation}
for some absolute constant $c_{\textup{abs}}$, and Hinrichs~\cite{hinr} obtained the lower bound 
\begin{equation} \label{hin}
n^*(d,\ve) \geq c_{\textup{abs}} d \ve^{-1}.
\end{equation}
Thus the inverse of the star-discrepancy depends linearly on the dimension; on the other hand, the precise dependence of $n^*(d,\ve)$ on $\ve$ is still an open problem.\\

The notion of the star-discrepancy can be easily generalized to measures $\mu$ different from the Lebesgue measure $\lambda$ (which is the measure of the uniform distribution on $[0,1]^d$). In the following we will assume that $\mu$ is a real, non-negative, normalized Borel measure on $[0,1]^d$. Similar to~\eqref{dn}, we define the star-discrepancy of a point set $x_1, \dots, x_N \in [0,1]^d$ with respect to $\mu$ by
\begin{equation} \label{discmu}
D_N^*(x_1, \dots, x_N;\mu) = \sup_{A \subset [0,1]^d} \left| \frac{1}{N} \sum_{n=1}^N \mathds{1}_A (x_n) - \mu(A) \right|.
\end{equation}
For any such $\mu$, any $d \geq 1$ and any $N \geq 1$, Beck~\cite{beck} proved the existence of a point set $x_1, \dots, x_N \in [0,1]^d$ for which
\begin{equation} \label{bec}
D_N^*(x_1, \dots, x_N;\mu) \ll \frac{\left( \log N \right)^{2d}}{N},
\end{equation} 
where the (unspecified) implied constant may only depend on $d$, but not on $N$ and, somewhat surprisingly, also not on the measure $\mu$. As a consequence for any $\mu$ and any $d\geq 1$ there also exists an infinite sequence of points $(x_n)_{n \geq 1}$ from $[0,1]^d$ for which
$$
D_N^*(x_1, \dots, x_N;\mu) \ll \frac{\left( \log N \right)^{2d+2}}{N} \qquad \textrm{for all $N \geq 1$}.
$$

The purpose of the present paper is to obtain an improved and numerically explicit version of Beck's theorems, using results on the balancing of vectors and the discrepancy of hypergraphs (these concepts are described in detail in Section~\ref{sechy} below). For the proof we need to approximate a general measure by a discrete measure, and we show how this problem is connected with the theory of large deviations of empirical processes and the inverse of the star-discrepancy. We also prove a general, numerically explicit version of~\eqref{hnwwb} (see Section~\ref{sec2}). Finally, combining our results with a recent version of the Koksma--Hlawka inequality due to Brandolini \emph{et al.}~\cite{bcgt}, we prove the existence of cubature rules achieving fast asymptotic convergence rates for the numerical integration of functions having discontinuities of a certain form (see Section~\ref{sec3}).\\

\begin{theorem} \label{th1}
Let $\mu$ be any non-negative normalized Borel measure on $[0,1]^d$, where $d \geq 1$ is arbitrary. Then for any $N \geq 1$ there exist points $x_1, \dots, x_N \in [0,1]^d$ such that
$$
D_N^*(x_1, \dots, x_N;\mu) \leq 63 \sqrt{d} \frac{ \left(2 + \log_2 N \right)^{(3d+1)/2}}{N}.
$$
\end{theorem}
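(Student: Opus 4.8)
The plan is to combine two reductions: first approximate the continuous measure $\mu$ by a discrete measure supported on finitely many points, and then balance a suitable hypergraph coloring to distribute those points into a low-discrepancy $N$-element set.

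First I would reduce to a discrete measure. Fix a large integer $M$ (to be chosen as a power of two, comparable to $N$ times a polylogarithmic factor), and partition $[0,1]^d$ into a grid that splits each coordinate axis at the appropriate $\mu$-quantiles, or more simply use a dyadic grid of mesh $1/M$ in each coordinate. One then replaces $\mu$ by a measure $\mu'$ that places mass (a multiple of $1/M^d$, or after rounding of $1/N$) at grid points, incurring an error controlled by the $\mu$-measure of the boundary cells that an axis-parallel anchored box can cut; because such a box meets only $O(d M^{d-1})$ grid cells along its faces, the approximation error in $D_N^*$ is $O(d M^{d-1}/M^d) = O(d/M)$, which is negligible once $M$ is chosen slightly larger than $N$. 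Alternatively, and this is probably cleaner for getting the stated constant, one chooses $M$ coordinatewise adapted to $\mu$ so that each of the $M$ slabs in a given direction carries $\mu$-mass exactly $1/M$; then the relevant combinatorial object has bounded "shatter" behaviour regardless of $\mu$, which is exactly why the final constant does not depend on $\mu$.

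Next comes the core step: the hypergraph / vector-balancing argument. Having a discrete measure $\mu'$ on $m = M^d$ grid points (with appropriate integer multiplicities), I want to successively halve the point set. Think of selecting a subset of the support that carries half the mass and such that every anchored box contains close to half of what it should. This is precisely a hypergraph two-coloring (equivalently a $\pm1$ vector-balancing) problem: the vectors to be balanced are the incidence vectors of the anchored boxes restricted to the current support, and Banaszczyk's theorem (invoked via the linear-discrepancy bound for hypergraphs stated in Section~\ref{sechy}) gives a coloring whose imbalance on every box is $O(\sqrt{\log(\text{number of boxes})})$. Since anchored boxes on an $M^d$ grid number at most $(M+1)^d$, each halving step introduces an additional discrepancy of order $\sqrt{d \log M}$. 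Iterating the halving roughly $\log_2(M^d/N) + O(\log\log)$ times — or rather, iterating $\log_2 N$-ish times starting from $M$ grid points down to $N$ points — and summing the errors (which form essentially a geometric-type sum dominated by the last, finest steps, each contributing $\sqrt{d\log M}$) yields a total of order $\sqrt{d}\,(\log N)^{(3d+1)/2}/N$ after dividing by $N$ and bookkeeping the $d$-dependence carefully; the $(3d+1)/2$ exponent arises because at each of $O(\log N)$ scales the box count is $\asymp N^d$, giving $(\log N)^{d/2}$ per step, and the scales compound.

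The main obstacle — and the place where care with constants is needed — is the bookkeeping of how the per-step balancing error of order $\sqrt{d \log M}$ aggregates across the $\Theta(\log N)$ halving steps while the grid resolution $M$ is itself tied to $N$, and simultaneously ensuring the discrete-approximation error stays below the target. The subtlety is that a naive sum of $\log N$ terms each of size $\sqrt{d}(\log N)^{d/2}$ would only give $(\log N)^{(d+2)/2}$, so to reach the exponent $(3d+1)/2$ one must account for the fact that the boxes relevant at scale $j$ number like $2^{jd}$ rather than a fixed $N^d$, making the early (coarse) contributions larger; the geometric growth of box-counts across scales is what produces the extra factor $(\log N)^{d}$. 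Tracking this precisely, together with the explicit constant in Banaszczyk's bound and the $\sqrt d$ factors, to land on the clean "$63\sqrt d$" is the delicate part; everything else is a routine, if lengthy, estimate.
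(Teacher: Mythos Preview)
Your proposal has a genuine gap in the core balancing step. You claim that Banaszczyk's theorem, applied to the incidence vectors of anchored boxes, yields per-step imbalance $O(\sqrt{\log(\text{number of boxes})})$. But the bound in Lemma~\ref{lemmahy} is $O(\sqrt{\Delta \log m})$, where $\Delta$ is the maximum degree of the hypergraph, equivalently the maximum squared $\ell_2$-norm of the column vectors. For the hypergraph whose edges are \emph{anchored} boxes on the current support, the degree of a point $z$ is the number of anchored boxes containing $z$, and this is not polylogarithmic: on an $M\times\cdots\times M$ grid a point near the origin lies in essentially all $M^d$ anchored boxes. So your per-halving error of ``$\sqrt{d\log M}$'' is unjustified, and the subsequent bookkeeping that tries to manufacture the exponent $(3d+1)/2$ rests on this incorrect estimate; your own hesitation in that paragraph is well founded.

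The paper's argument is structurally different and avoids iterated halving. After discretizing $\mu$ (via Corollary~\ref{co1}, which produces $K=2^{26}dN^2$ points $z_1,\dots,z_K$ with $D_K^*(z;\mu)\le 1/N$), one partitions these $K$ points into $N^d$ cells by coordinatewise quantiles, assigns each cell a real weight $\beta\in[0,1]$, and rounds all the $\beta$'s to $\{0,1\}$ in a \emph{single} application of Lemma~\ref{lemmahy}. The idea you are missing is that the hypergraph fed to Banaszczyk is not the system of anchored boxes but the system of \emph{dyadic} boxes in $\{1,\dots,N\}^d$: for each scale vector $(m_1,\dots,m_d)\in\{0,\dots,m\}^d$ (with $N=2^m$) the dyadic boxes at that scale form a partition, so every vertex lies in exactly one box per scale vector, giving total degree $\Delta=(m+1)^d\approx(\log_2 N)^d$. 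With at most $(m+1)^d N^d$ edges, Lemma~\ref{lemmahy} then gives imbalance $O\bigl(\sqrt{d(\log N)^{d+1}}\bigr)$ on each dyadic box; an anchored discrete box decomposes into at most $(m+1)^d\approx(\log N)^d$ disjoint dyadic pieces, and multiplying these two factors produces $(\log N)^{(3d+1)/2}$. That dyadic decomposition, not any iteration over scales, is the source of the exponent.
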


\begin{theorem} \label{th2}
Let $\mu$ be any non-negative normalized Borel measure on $[0,1]^d$, where $d \geq 1$ is arbitrary. Then there exists an infinite sequence $(x_n)_{n \geq 1}$ of points from $[0,1]^d$ such that
$$
D_N^*(x_1, \dots, x_N; \mu) \leq 133 \sqrt{d+1} \frac{(4 + 2 \log_2 N)^{(3d+4)/2}}{N} \qquad \textrm{for $N \geq 1$.}
$$
\end{theorem}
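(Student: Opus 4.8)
The plan is to derive the infinite-sequence result from the finite result (Theorem~\ref{th1}) by a standard dyadic "divide-and-conquer" argument, but carried out with care because the implied constant in Theorem~\ref{th1} must not be allowed to depend on the measure $\mu$. The idea is this: suppose we want to build a sequence $(x_n)_{n\geq 1}$ that works for all $N$ simultaneously. We group the indices into dyadic blocks $B_k = \{2^{k-1}+1, \dots, 2^k\}$ for $k \geq 1$ (together with the singleton $\{1\}$), each of length roughly $2^{k-1}$. For the $k$-th block we want to place points so that they "fill in" the discrepancy left over by the previous blocks; the natural device is to apply the finite theorem not to $\mu$ itself but to an auxiliary signed object that records the current deficit. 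Concretely, after placing the first $2^{k-1}$ points, one wants the next $2^{k-1}$ points $x_n$, $n \in B_k$, to satisfy, for every box $A$,
\begin{equation*}
\left| \sum_{n \in B_k} \mathds{1}_A(x_n) - \left( 2^k \mu(A) - \sum_{n \leq 2^{k-1}} \mathds{1}_A(x_n) \right) \right| \ll \text{(small)},
\end{equation*}
i.e.\ the new block should approximate the measure $2^{k-1}\mu$ shifted by the empirical deficit of the earlier points. The technical point is that the quantity in parentheses, divided by $2^{k-1}$, is \emph{not} a probability measure — it is a signed set function of bounded total mass. So the cleanest route is to re-run the \emph{proof} of Theorem~\ref{th1} for this signed target rather than to cite Theorem~\ref{th1} as a black box; the Banaszczyk vector-balancing input (an upper bound for the linear discrepancy of hypergraphs) is inherently about balancing $\pm 1$ against an arbitrary real vector, so it applies verbatim to signed targets, and this is exactly the robustness we need.

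The steps, in order, would be: (i) fix the dyadic decomposition and set up the recursion, tracking the "deficit function" $g_k(A) = 2^k \mu(A) - \sum_{n \leq 2^{k-1}} \mathds{1}_A(x_n)$; (ii) observe that the construction underlying Theorem~\ref{th1} — discretize the $(d+1)$-dimensional target onto a fine grid, form the associated hypergraph of boxes, and apply the Banaszczyk/linear-discrepancy bound — goes through with $\mu$ replaced by (a normalization of) $g_k$, because the discretization error and the hypergraph are controlled by the geometry of boxes in $[0,1]^d$ and not by the sign or the total mass of the target, and because the relevant hypergraph on $2^{k-1}$-ish points has the same combinatorial dimension; (iii) conclude that the $k$-th block can be chosen so that $\sup_A |g_{k+1}(A)| \ll \sqrt{d}\, (\log 2^k)^{(3d+1)/2} \ll \sqrt{d}\, k^{(3d+1)/2}$, i.e.\ the absolute deficit after $2^k$ points is bounded by essentially the Theorem~\ref{th1} bound at scale $2^{k-1}$, namely $\ll \sqrt{d+1}\,(2+\log_2 N)^{(3d+1)/2}$ with $N=2^k$; (iv) for general $N$ with $2^{k-1} < N \leq 2^k$, split the discrepancy of $x_1,\dots,x_N$ as a telescoping sum over completed blocks plus one partial block, bound the completed part by the geometric-type sum $\sum_{j \leq k} C\sqrt{d+1}\, j^{(3d+1)/2}$, and bound the partial block trivially by its cardinality $\leq 2^{k-1} \leq N$; (v) collect constants. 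The geometric decay is the reason the tail sum is dominated by its last term, up to a factor absorbed by bumping the exponent and the constant — which is precisely why the exponent $(3d+1)/2$ in Theorem~\ref{th1} becomes $(3d+4)/2$ in Theorem~\ref{th2} (roughly, one loses a $\log N$ factor, i.e.\ $3/2$ more in the exponent after also accounting for $\sqrt{d}$ vs $\sqrt{d+1}$ bookkeeping), and why the constant grows from $63$ to $133$.

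The main obstacle, and the place where the argument needs genuine care rather than boilerplate, is step~(ii): ensuring that the entire machinery behind Theorem~\ref{th1} — in particular the passage from a general Borel measure to a discrete measure on a grid, which in the paper is linked to large deviations of empirical processes and the inverse of the star-discrepancy — remains valid when the target is a signed set function of total variation $O(1)$ rather than a probability measure. One has to check that the grid approximation still costs only $O(1/N)$ per coordinate (this is fine, since it is a statement about box geometry), that the resulting hypergraph of "box traces" on the point set has the same VC-type / dimension bounds (also fine, by the same Sauer--Shelah-type counting), and that Banaszczyk's theorem is being applied to a real vector of the right length with the right $\ell_2$ bound — all of which it is, because Banaszczyk's result is agnostic to signs. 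A secondary nuisance is the bookkeeping of the additive constants ($2$ inside the $\log_2$, the $4+2\log_2 N$, etc.) so that the inequality holds for \emph{all} $N \geq 1$ including small $N$; this is handled by starting the recursion from the trivial block $\{1\}$ (one point, discrepancy $\leq 1$) and checking the first few values of $N$ by hand. No obstruction of substance remains once step~(ii) is in hand; the rest is the summation of a rapidly convergent series.
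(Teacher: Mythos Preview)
Your approach diverges from the paper's, and the gap is in step~(iv). Bounding the partial block ``trivially by its cardinality $\leq 2^{k-1}$'' is fatal: for $N=2^{k-1}+j$ with $j$ of order $2^{k-1}$, the partial contribution $\bigl|\sum_{n=2^{k-1}+1}^{N}\mathds{1}_A(x_n)-j\mu(A)\bigr|$ can be as large as $j$, so after dividing by $N$ you get a discrepancy of order a constant, not $(\log N)^{(3d+4)/2}/N$. Nothing in your recursion controls the discrepancy of a \emph{prefix} of a block: Banaszczyk's theorem (and hence the proof of Theorem~\ref{th1}) produces a coloring of the whole vertex set with small hyperedge sums, but says nothing about partial colorings or any ordering of the points that would make prefix sums small. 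So the ``completed blocks'' part of your telescoping is fine, but the incomplete block destroys the bound for every $N$ not of the form $2^k$.

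The paper handles exactly this issue by a different device: it lifts to dimension $d+1$, setting $\nu=\mu\times\lambda$ on $[0,1]^{d+1}$, applies Theorem~\ref{th1} \emph{as a black box} in dimension $d+1$ to obtain finite point sets $\mathcal{Q}_i\subset[0,1]^{d+1}$, sorts each $\mathcal{Q}_i$ by its last coordinate, and then projects to $[0,1]^d$. The sorting is the key: a prefix of length $j$ of the $i$-th block corresponds to the $(d{+}1)$-dimensional anchored box $A\times[0,y^{(i)}_j]$, so the partial-block discrepancy is controlled by $D_{N_i}^*(\mathcal{Q}_i;\nu)$ itself. This is precisely why the exponent jumps from $(3d+1)/2$ to $(3(d+1)+1)/2=(3d+4)/2$ and why $\sqrt{d}$ becomes $\sqrt{d+1}$ --- not because of a ``lost $\log N$ factor'' from summing over blocks, as you suggest, but because Theorem~\ref{th1} is being invoked one dimension up. Your signed-target idea in step~(ii) is therefore unnecessary (the paper never needs to balance against anything but a genuine probability measure), and your rationalization of the exponent shift is not quite right. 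To repair your approach without the lift you would need a vector-balancing result that controls all prefix sums, which is a genuinely different and harder input.
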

In the statement of these theorems and throughout the rest of this paper $\log$ denotes the natural logarithm, and $\log_2$ denotes the dyadic logarithm.\\

For the proofs of these theorems we need several auxiliary results concerning the linear discrepancy of hypergraphs and the approximation of measures by discrete measures; they are presented in Sections \ref{sechy} and \ref{sec2}, respectively. In Section \ref{sec3} we present an application of Theorem~\ref{th1} for the numerical integration of certain discontinuous functions. Some open problems are stated in Section~\ref{secop}, and proofs are given in Section~\ref{secproofs}.

\section{Discrepancy of hypergraphs and matrices} \label{sechy}

Let $X$ be a finite set, and let $\mathcal{E}$ be a family of subsets of $X$ (which are called \emph{(hyper)edges}). Then the pair $\Hy = (X,\ed)$ is called a (finite) \emph{hypergraph}. We can partition the elements of $X$ into two classes by a coloring function $b:~X \to \{-1,1\}$. The discrepancy of $\Hy$ is then defined as
$$
\disc(\Hy) = \min_{b:~X \to \{-1,1\}} ~\max_{E \in \ed}~ | b(E)|,
$$
where $b(E) = \sum_{x \in E} b(x)$. The concept of discrepancy of hypergraphs can be generalized to discrepancy of matrices in a natural way. Let $A = (a_{ij})_{1 \leq i \leq m,1 \leq j \leq n}$ be an $m \times n$-matrix, and set
$$
\disc(A) = \min_{b \in \{-1,1\}^n} \|A b\|_\infty. 
$$ 
This notion of discrepancy of matrices really contains the notion of discrepancy of hypergraphs; to see this, let $X=\{x_1, \dots, x_n\}$ and $\ed = \{E_1, \dots, E_m\}$ and set $A= (a_{ij})$, where $a_{ij}=1$ whenever $x_j \in E_i$ and $a_{ij}=0$ otherwise. Then $A$ is called the \emph{incidence matrix} of $\Hy$ and $disc(A)=disc(\Hy)$.\\

There are two closely related notions of discrepancy of matrices. One of them is the \emph{linear discrepancy}, which is defined as 
$$
\lindisc(A) = \max_{\beta \in [-1,1]^n} \min_{b \in \{-1,1\}^n} \| A (\beta-b) \|_\infty.
$$
The second is the \emph{hereditary discrepancy}, which is defined as 
$$
\herdisc(A) = \max_{J \subset \{1, \dots, n\}} \disc ((a_{ij})_{1 \leq i \leq m,j \in J}).
$$
Both notions can also be used to define the respective discrepancies for hypergraphs, by identifying a hypergraph with its incidence matrix. Then $\herdisc(\Hy)$ is the maximum discrepancy of all induced subgraphs of $\Hy$, while $\lindisc(\Hy)$ gives a bound for the error for approximating reals by integers (in the spirit of Lemmas~\ref{beckfiala} and~\ref{lemmahy} below).\\

An important relation which we will need is that for any $A$ we have
\begin{equation} \label{becks}
\lindisc(A) \leq 2 \herdisc(A);
\end{equation}
this inequality is due to Beck--Spencer~\cite{bsi} and Lov{\'a}sz--Spencer--Vesztergombi~\cite{lsv}. More information on the discrepancy of hypergraphs and matrices can be found in the books of Chazelle~\cite{cha} and Matou{\v{s}}ek~\cite{mat}.\\

Let $\Delta(\Hy)$ denote the \emph{maximum degree} of a hypergraph $\Hy$, that is 
$$
\Delta(\Hy) = \max_{x \in X} \# \{E \in \ed:~x \in E\}.
$$
In other words, no element $x$ of $X$ is contained in more than $\Delta(\Hy)$ sets $E \in \ed$. The Beck--Fiala theorem states that
\begin{equation} \label{beckf}
\disc(\Hy) \leq 2 \Delta(\Hy) -1
\end{equation}
for any hypergraph $\Hy$. Clearly the maximum degree of an induced subgraph of $\Hy$ cannot exceed the maximum degree of $\Hy$. Thus by a combination of~\eqref{becks} and~\eqref{beckf} we have
\begin{equation} \label{hye}
\lindisc(\Hy) \leq 4 \Delta(\Hy) -2
\end{equation}
for any hypergraph $\Hy$. A direct consequence of this inequality is the following lemma, which is (in a slightly stronger form) the key ingredient in Beck's proof of~\eqref{bec} in~\cite{beck}. 
\begin{lemma} \label{beckfiala}
Let real numbers $\beta_1, \dots, \beta_n \in [0,1]$ be given, and let $\ed$ be a family of subsets of $\{1, \dots, n\}$. Assume that each $i \in \{1, \dots, n\}$ belongs to at most $\Delta$ elements of $\ed$. Then there exist integers $b_1, \dots, b_n \in \{0,1\}$ such that
$$
\left| \sum_{i \in E} \beta_i - \sum_{i \in E} b_i \right| \leq 2\Delta -1 \qquad \textup{for all $E \in \ed$}.
$$
\end{lemma}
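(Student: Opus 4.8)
The plan is to derive Lemma~\ref{beckfiala} directly from the hypergraph linear discrepancy bound~\eqref{hye}. First I would translate the combinatorial data into matrix form: let $\Hy = (X, \ed)$ with $X = \{1, \dots, n\}$ and let $A = (a_{ij})$ be its incidence matrix, so that $a_{ij} = 1$ if $i \in E_j$ and $0$ otherwise (indexing edges as $E_1, \dots, E_m$). The hypothesis that each $i$ lies in at most $\Delta$ edges says precisely that $\Delta(\Hy) \leq \Delta$, hence by~\eqref{hye} we have $\lindisc(A) = \lindisc(\Hy) \leq 4\Delta - 2$.

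Next I would apply the definition of linear discrepancy with a suitable choice of the fractional vector $\beta$. The subtlety is that the definition of $\lindisc(A)$ involves rounding to $\{-1,1\}$-vectors, whereas the lemma wants rounding of $[0,1]$-numbers to $\{0,1\}$-integers; so the second step is a routine affine reparametrization. Given $\beta_1, \dots, \beta_n \in [0,1]$, set $\beta_i' = 2\beta_i - 1 \in [-1,1]$ and apply the definition of $\lindisc(A)$ to the point $\beta' = (\beta_1', \dots, \beta_n') \in [-1,1]^n$: there exists $b' \in \{-1,1\}^n$ with $\|A(\beta' - b')\|_\infty \leq \lindisc(A) \leq 4\Delta - 2$. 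Writing $b_i = (b_i' + 1)/2 \in \{0,1\}$, we have $\beta_i' - b_i' = 2(\beta_i - b_i)$, so for each edge $E_j$ the $j$-th coordinate of $A(\beta' - b')$ equals $2\sum_{i \in E_j}(\beta_i - b_i)$. Hence $\bigl| \sum_{i \in E_j}\beta_i - \sum_{i \in E_j} b_i \bigr| \leq (4\Delta - 2)/2 = 2\Delta - 1$ for every $E_j \in \ed$, which is exactly the claimed bound.

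I do not expect a serious obstacle here: the statement is essentially a repackaging of~\eqref{hye}, and the only thing to be careful about is keeping track of the factor $2$ coming from the $\{0,1\}$-versus-$\{-1,1\}$ normalization so that the final constant comes out as $2\Delta - 1$ rather than $4\Delta - 2$. One minor point worth a sentence in the write-up is that if $\ed$ is empty or $\Delta = 0$ the statement is vacuous, and that the incidence matrix correspondence $\disc(A) = \disc(\Hy)$ (and likewise for $\lindisc$) is exactly the one recorded in Section~\ref{sechy}, so no new machinery is needed. The whole argument is a few lines.
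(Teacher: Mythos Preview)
Your proposal is correct and is exactly the argument the paper has in mind: the paper states the lemma as ``a direct consequence'' of~\eqref{hye} and remarks that passing from $\{-1,1\}$-colors to $\{0,1\}$-colors saves the factor~$2$, which is precisely your affine reparametrization. One small slip to fix in the write-up: your formula $a_{ij}=1$ iff $i\in E_j$ has the indices transposed relative to the computation you then carry out (and to the paper's convention, where rows are edges and columns are vertices); you want $a_{ji}=1$ iff $i\in E_j$ so that the $j$-th coordinate of $A(\beta'-b')$ really equals $2\sum_{i\in E_j}(\beta_i-b_i)$.
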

(Note that this lemma corresponds to using ``colors'' $\{0,1\}$ rather than $\{-1,1\}$, which saves us a factor 2 in comparison with~\eqref{hye}).\\

The Beck--Fiala theorem is remarkable insofar as the upper bound in~\eqref{beckf} depends only on $\Delta(\Hy)$, but \emph{not} on the number of vertices $n$ or the number of edges $m$. However, it turns out that we can improve~\eqref{bec} by using a different bound for the discrepancy of hypergraphs, which gives an improved dependence on $\Delta(\Hy)$ at the cost of an additional dependence on $m$. The following lemma is a consequence of a result of Banaszczyk~\cite{bana}; we will first state his result, then the lemma we need, and then show how the latter can be deduced from the former.\\

Banaszczyk proved the following (\cite[Theorem 1 and the remark on p.~353]{bana}): Let $K$ be a convex body in $\R^m$ whose $m$-dimensional (standard) Gaussian measure $\gamma_m$ is at least $\gamma_m(K) \geq 1/2$ and which contains the origin in its interior. Then for any points $u_1, \dots, u_n \in \R^m$ satisfying $\|u_i\|_2 \leq 1/5,~1 \leq i \leq n$, there exist signs $b_1, \dots, b_n \in \{-1,1\}$ such that $b_1 u_1 + \dots + b_n u_n \in K$.\\

\begin{lemma} \label{lemmahy}
Let real numbers $\beta_1, \dots, \beta_n \in [0,1]$ be given, and let $\ed$ be a family of subsets of $\{1, \dots, n\}$. Assume that each $i \in \{1, \dots, n\}$ belongs to at most $\Delta$ elements of $\ed$, and let $m$ denote the cardinality of $\ed$. Then there exist integers $b_1, \dots, b_n \in \{0,1\}$ such that
$$
\left| \sum_{i \in E} \beta_i - \sum_{i \in E} b_i \right| \leq 5 \sqrt{2 \Delta \log (2m)} \qquad \textup{for all $E \in \ed$}.
$$
Here we can choose $b_i=0$ whenever $\beta_i=0$.
\end{lemma}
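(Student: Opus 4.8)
The plan is to deduce Lemma~\ref{lemmahy} from Banaszczyk's vector-balancing theorem by the standard route: first pass from $\{0,1\}$-integers to $\{-1,1\}$-signs, then choose the convex body $K$ to be a slab (actually an intersection of slabs, one per edge) so that membership in $K$ is exactly the bound we want, and finally rescale the vectors so that the hypothesis $\|u_i\|_2 \le 1/5$ holds. Concretely, to each edge $E_j \in \ed$ ($1 \le j \le m$) associate its incidence row, and to each index $i \in \{1,\dots,n\}$ associate the vector $v_i \in \R^m$ whose $j$-th coordinate is $1$ if $i \in E_j$ and $0$ otherwise. The key numerical fact is $\|v_i\|_2 = \sqrt{\#\{E \in \ed : i \in E\}} \le \sqrt{\Delta}$, so after scaling by $1/(5\sqrt{\Delta})$ we get vectors $u_i = v_i/(5\sqrt{\Delta})$ with $\|u_i\|_2 \le 1/5$, as required.

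Next I would set up the convex body. Take $K = \{ y \in \R^m : \|y\|_\infty \le t \}$ for a threshold $t$ to be determined; this is a cube (a bounded convex body containing the origin in its interior). The Gaussian-measure requirement $\gamma_m(K) \ge 1/2$ reduces, by a union bound over the $m$ coordinates and the standard tail estimate $\p(|Z| > t) \le e^{-t^2/2}$ for a standard normal $Z$, to choosing $t$ with $m e^{-t^2/2} \le 1/2$, i.e. $t = \sqrt{2 \log(2m)}$ works. Banaszczyk's theorem then yields signs $\epsilon_1, \dots, \epsilon_n \in \{-1,1\}$ with $\sum_i \epsilon_i u_i \in K$, that is, $\left|\sum_{i \in E} \epsilon_i\right| \le 5\sqrt{\Delta} \cdot \sqrt{2\log(2m)} = 5\sqrt{2\Delta \log(2m)}$ for every $E \in \ed$; this is the $\{-1,1\}$-coloring version of the desired inequality for the all-ones weights.

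The final step is the reduction from signs to the rounding statement with arbitrary $\beta_i \in [0,1]$, together with the claim that $b_i = 0$ whenever $\beta_i = 0$. Here I would follow the usual ``chip away the binary digits'' argument used in deriving $\lindisc \le 2\herdisc$: write each $\beta_i$ in binary and round one bit at a time, at each stage applying the sign version only to the \emph{active} coordinates (those still fractional), which form an induced sub-hypergraph whose maximum degree is still at most $\Delta$ and whose number of edges is still at most $m$. Summing the geometric series of the per-stage errors $2^{-k}\cdot\tfrac12\cdot 5\sqrt{2\Delta\log(2m)}$ gives a total rounding error bounded by $5\sqrt{2\Delta\log(2m)}$. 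Because an index with $\beta_i = 0$ is never active at any stage, it is never perturbed and ends at $b_i = 0$, giving the last assertion. The main obstacle — and the only place where care is really needed — is this last reduction: one must verify that restricting to active coordinates genuinely preserves both $\Delta$ and $m$ (it does, since deleting vertices only shrinks degrees and never adds edges), and that the $\tfrac12$-saving from using digit-by-digit rounding (equivalently, from colors $\{0,1\}$ rather than $\{-1,1\}$) exactly cancels the geometric-series factor of $2$, so that no spurious constant creeps in. Everything else is a direct substitution into Banaszczyk's theorem.
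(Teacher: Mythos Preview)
Your proposal is correct and follows essentially the same route as the paper: apply Banaszczyk's theorem with $K=[-\sqrt{2\log(2m)},\sqrt{2\log(2m)}]^m$ to the (rescaled) columns of the incidence matrix to get $\disc(\Hy)\le 5\sqrt{2\Delta\log(2m)}$, observe that the same bound holds for every induced subhypergraph and hence for $\herdisc(\Hy)$, and then pass to the $[0,1]$-rounding statement. The only cosmetic difference is that the paper invokes the inequality $\lindisc\le 2\herdisc$ and the $\{0,1\}$-versus-$\{-1,1\}$ factor-of-two saving as a black box, whereas you sketch the underlying bit-by-bit argument explicitly; your handling of the case $\beta_i=0$ (such indices are never active) matches the paper's remark that one may simply ignore those vertices.
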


The fact that Banaszczyk's theorem implies an upper bound for the discrepancy of a hypergraph of given maximum degree is known, and is for example mentioned at the end of Section 4.3 of Matou{\v{s}}ek's book~\cite[p.~115-116]{mat} (where the upper bound $\mathcal{O}(\sqrt{\Delta \log n})$ is given). However, we have not found any place where the value of the implied constant is explicitly stated; thus a detailed deduction of Lemma~\ref{lemmahy} is given below.\\

Banaszczyk's theorem can also be applied to another interesting problem. Let $u_1, \dots, u_n \in \R^m$ be vectors having Euclidean norm at most 1. Then by Banaszczyk's theorem there exist signs $b_1, \dots, b_n \in \{-1,1\}$ such that $\| b_1 u_1 + \dots + b_n u_n \|_\infty = \mathcal{O}(\sqrt{\log m})$. A famous conjecture of Komlos states that here the factor $\mathcal{O}(\sqrt{\log m})$ can be replaced by an absolute constant. If the Komlos conjecture is true, then the upper bound in~\eqref{beckf} can be replaced by $\mathcal{O}(\sqrt{\Delta(\Hy)})$ (which is known as the Beck--Fiala conjecture), the upper bound in Lemmas~\ref{beckfiala} and~\ref{lemmahy} can also be replaced by $\mathcal{O}(\sqrt{\Delta})$, and the asymptotic order of the logarithmic terms in Theorems~\ref{th1} and~\ref{th2} can be improved from $(3d+1)/2$ and $(3d+4)/2$ to $3d/2$ and $(3d+3)/2$, respectively.

\begin{proof}[Proof of Lemma~\ref{lemmahy}]
The density of the $m$-dimensional standard Gaussian measure is given by
$$
\gamma_n (y_1,  \dots, y_m) = (2 \pi)^{-m/2} \exp \left( - \frac{y_1^2 + \dots + y_m^2}{2} \right), \qquad (y_1, \dots, y_m) \in \R^m.
$$
Consequently for $K = [-\sqrt{2 \log (2m)},\sqrt{2 \log (2m)}]^m$ we have
\begin{eqnarray*}
\gamma_m(K) & = & \left(\frac{1}{\sqrt{2 \pi}} \int_{-\sqrt{2 \log (2m)}}^{\sqrt{2 \log (2m)}} e^{-y^2/2} ~dy \right)^m \\
& \geq & \left( 1 - \frac{2}{\sqrt{2 \pi}} \int_{\sqrt{2 \log (2m)}}^{\infty} \frac{y}{\sqrt{2 \log (2m)}} e^{-y^2/2} ~dy \right)^m  \\
& = & \left( 1 - \frac{1}{\sqrt{\pi \log (2m)}} e^{-\log (2m)}  \right)^m  \\
& = & \left( 1 - \frac{1}{2m \sqrt{\pi \log (2m)}} \right)^m \\
& \geq & 1/2
\end{eqnarray*}
(note that $\sqrt{\pi \log (2m)} \geq 1$ for $m \geq 1$). Let $X=\{1, \dots, n\}$ and the family $\ed$ of subsets of $X$ be given, and write $\Hy = (X,\ed)$. Then by the assumptions of the lemma the hypergraph $\Hy$ has maximum degree at most $\Delta$, and consequently the incidence matrix $A$ of $\Hy$ has column vectors $u_1, \dots, u_n$ for which $\|u_i\|_2 \leq \sqrt{\Delta},~1 \leq i \leq n$. Thus by Banaszczyk's theorem there exists colors $b_1, \dots, b_n \in \{-1,1\}$ such that
$$
\| b_1 u_1 +  \dots + b_n u_n \|_\infty \leq 5 \sqrt{2 \Delta \log (2m)}.
$$
In other words, for the discrepancy of $\Hy$ we have
$$
\disc (\Hy) \leq 5 \sqrt{2 \Delta \log (2m)}.
$$
Since the maximum degree of any induced subgraph of $\Hy$ is also at most $\Delta$, together with~\eqref{becks} this yields
$$
\lindisc(\Hy) \leq 10 \sqrt{2 \Delta \log (2m)},
$$
which implies Lemma~\ref{lemmahy} (note again that in Lemma~\ref{lemmahy} we use ``colors'' $\{0,1\}$ instead of $\{-1,1\}$, which saves us a factor 2). The fact that we can choose $b_i=0$ whenever $\beta_i=0$ is trivial (we can simply ignore all vertices $i$ for which $\beta_i=0$ in the application of Banaszczyk's theorem).
\end{proof}

\section{Empirical processes} \label{sec2}

One important ingredient in the proofs of Theorems~\ref{th1} and~\ref{th2} is the fact that any measure $\mu$ on $[0,1]^d$ can be approximated by a discrete measure. In this section it is shown how the existence of such an approximation, together with convergence rates, can be proved by considering the empirical process of $\mu$-distributed independent random variables, and how large deviation bounds for empirical processes indexed by sets can be used to obtain a generalization of the \emph{inverse of the discrepancy}-result~\eqref{hnwwb}. The results which we obtain in this section are much stronger than what would be necessary for the proofs of Theorems~\ref{th1} and~\ref{th2}, but they are of some independent interest.\\

Throughout this section, let $X_1, X_2, \dots$ be a sequence of independent, identically distributed random variables taking values in a measurable space $(X,\mathcal{A})$ and having distribution $P$, and let $\mathcal{C}$ be a class of subsets of $X$. For $C \in \mathcal{C}$ let $P_N(C)$ denote the empirical distribution of $X_1, \dots, X_N$, that is,
$$
P_N = \frac{1}{N} \sum_{n=1}^N \delta_{X_n},
$$
where $\delta_Y$ denotes the Dirac measure centered on $Y$, and let
$$
\alpha_N (C) = N^{1/2} (P_N(C) - P(C)), \qquad C \in \mathcal{C},
$$
denote the empirical process indexed by $\mathcal{C}$.  To avoid measurability problems we will throughout this paper assume that $\mathcal{C}$ is countable. In this section we will be concerned with probabilistic estimates for the size of 
\begin{equation}
\sup_{C \in \mathcal{C}} |\alpha_N(C)|,
\end{equation}
which of course will depend on the complexity of the class $\mathcal{C}$. Let $A$ be a finite subset of $X$. Then $\mathcal{C}$ is said to shatter $A$ if for every subset $B$ of $A$ there exists a $C \in \mathcal{C}$ such that $B = A \cap C$. If there exists a largest finite number $d$ such that $\mathcal{C}$ shatters at least one set of cardinality $d$, then $\mathcal{C}$ is called a \emph{Vapnik--\v{C}ervonenkis class} (VC-class) of index $d$. The notion of VC-classes is well-established in the theory of empirical processes indexed by sets, since there exist strong bounds for the metric entropy of such classes. Using such entropy bounds (due to Haussler~\cite{hau}), Talagrand~\cite[Theorem 1.1 (i)]{tala} proved that there exists an absolute constant $c_{\textup{abs}}$ such that for each VC-class $\mathcal{C}$ of index $d \geq 1$ we have for all $t>0$
\begin{equation} \label{tala}
\p \left( \sup_{C \in \mathcal{C}} |\alpha_N(C)| \geq t\right) \leq \frac{c_{\textup{abs}}}{t} \left( \frac{c_{\textup{abs}} t^2}{d}\right)^d e^{-2t^2}.
\end{equation}

Talagrand's result, which is the key ingredient in the proof of~\eqref{hnwwb}, is a significant improvement of earlier results of a similar type (for a comparison with earlier results, see Table 1 in~\cite{vad}). Regrettably Talagrand doesn't give an explicit value for the value of the constant $c_\textup{abs}$ in~\eqref{tala}. It seems that in principle Talagrand's method would allow to obtain an explicit estimate for $c_{\textup{abs}}$; however, he writes: ``\emph{We have, however, felt that the search of sharp numerical constants is better left to others with the talent and the taste for it}'' (\cite[p. 31]{tala}), and apparently no one has carried out these calculations since then. Yet there do exist weaker versions of~\eqref{tala} which do not involve any unknown constants. One result of this type is the original estimate of Vapnik and \v{C}ervonenkis~\cite{vc}, which together with Sauer's lemma~\cite{sau} gives 
\begin{equation} \label{vc}
\p \left( \sup_{C \in \mathcal{C}} |\alpha_N(C)| \geq t \right) \leq 8 \left(\frac{eN}{d}\right)^d e^{-t^2/32}
\end{equation}
for $t > 0$ (for a textbook treatment, see for example~\cite[Theorem 12.5]{dgla}). Note that~\eqref{vc} is weaker than~\eqref{tala} in both its dependence on $N$ and $t$, and could not be used to prove~\eqref{hnwwb}. Another completely explicit large deviations bound for empirical processes, which will be perfectly suitable for our purpose, is due to Alexander. We state it as a lemma.

\begin{lemma} \label{lemmaalex}
Let $N \geq 1$ and let $\mathcal{C}$ be a countable VC-class of index $d$. Then for any $t$ satisfying both
\begin{equation} \label{alexcond1}
t > \frac{2^{33/2} d}{\sqrt{N}} \log\left(\max\left(\frac{N}{2d},e\right)\right)
\end{equation}
and
\begin{equation} \label{alexcond2}
t > \sqrt{2^{25} d \log(4)}.
\end{equation}
we have
$$
\p \left( \sup_{C \in \mathcal{C}} |\alpha_N(C)| > t \right) \leq 16 e^{-t^2}.
$$
\end{lemma}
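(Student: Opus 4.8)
The plan is to obtain Lemma~\ref{lemmaalex} as a specialization of a general large-deviation inequality of Alexander~\cite{alex} (in the form it takes after the published correction) to the case of VC-classes. Alexander's inequality applies to any (permissible, in particular any countable) class of sets $\mathcal{C}$ whose covering numbers satisfy a polynomial bound of the shape $N(\ve,\mathcal{C}) \leq A\ve^{-W}$ for $0 < \ve \leq 1$ with respect to a suitable $L^1$- or $L^2$-type distance, and it states that whenever $t$ exceeds two explicit thresholds --- one proportional to $\sqrt{W}$, the other of the shape $\mathrm{const}\cdot W N^{-1/2}$ times a logarithmic factor, with all constants and the argument of the logarithm explicit in terms of $A$, $W$ and $N$ --- one has $\p(\sup_{C\in\mathcal{C}}|\alpha_N(C)| > t) \leq C(W)\,t^{W-1}\,e^{-2t^2}$. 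The first step is therefore simply to quote Alexander's result, recording every numerical constant carefully.

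The second step is to feed in the VC-input. A VC-class $\mathcal{C}$ of index $d$ satisfies, by Sauer's lemma, the shatter-function bound $\#\{C\cap A:\ C\in\mathcal{C}\} \leq (eN/d)^d$ on any $N$-point set $A$, and, by Haussler's covering bound~\cite{hau} (already used above for Talagrand's inequality), a uniform estimate of the form $N(\ve,\mathcal{C}) \leq Cd(4e)^d\ve^{-2d}$; hence one may take $W = 2d$ and an explicit $A = A(d)$ in Alexander's hypothesis. Substituting $W = 2d$ and this $A$ into Alexander's two thresholds produces, after simplification, precisely the two conditions~\eqref{alexcond1} and~\eqref{alexcond2}, while substituting into his prefactor leaves a bound of the shape $C(d)\,t^{2d-1}e^{-2t^2}$.

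The third and only substantive step is to collapse this into the clean bound $16\,e^{-t^2}$. For this, condition~\eqref{alexcond2}, which forces $t^2 > 2^{25}d\log 4$, is used to dominate both the polynomial factor in $t$ and the exponential factor in $d$: since $t^2/\log t \to \infty$ and $t^2 \gg d$ in this regime, the inequality $C(d)\,t^{2d-1} \leq 16\,e^{t^2}$ holds with ample room, converting $C(d)\,t^{2d-1}e^{-2t^2}$ into $16\,e^{-t^2}$. The main obstacle is thus not conceptual but a matter of careful bookkeeping: propagating Alexander's (rather large) numerical constants through the specialization so that the two thresholds emerge exactly as $2^{33/2}dN^{-1/2}\log(\max(N/2d,e))$ and $\sqrt{2^{25}d\log 4}$, and checking that the crude absorption in the last step is valid uniformly over all $d\geq 1$ and $N\geq 1$. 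One should also note that the measurability hypothesis in Alexander's theorem is automatically satisfied here because $\mathcal{C}$ is assumed countable, so that the outer probability coincides with the ordinary one.
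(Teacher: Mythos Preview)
Your plan diverges from the paper's in two related ways. First, the paper does not derive the lemma through a covering-number hypothesis plus Haussler's bound; it obtains the lemma by direct specialization of Alexander's Theorem~2.8(ii) in~\cite{alex}, which is stated for VC graph classes of functions with $(d,k)$-constructible graph region class. As Alexander notes on p.~1049 of~\cite{alex}, a VC class of sets of index $d$ is automatically such a class with $k=1$, so no detour through Haussler is needed; one simply inserts the parameter choices $\psi=2M^2$, $\ve=1/2$, $\alpha=1/4$ (the variance of an indicator being at most $1/4$) into Alexander's statement, and the two thresholds~\eqref{alexcond1},~\eqref{alexcond2} together with the bound $16e^{-t^2}$ drop out verbatim. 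The countability of $\mathcal{C}$ handles the measurability issue exactly as you say.

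Second, and more substantively, your picture of Alexander's output appears to be modeled on Talagrand's inequality~\eqref{tala}: you expect a bound of the shape $C(d)\,t^{2d-1}e^{-2t^2}$ which then has to be \emph{absorbed} into $16e^{-t^2}$ using the largeness of $t$. That is not how Alexander's Theorem~2.8(ii) is structured. His result already delivers a clean bound of the form $16e^{-t^2/\psi}$ with no polynomial prefactor in $t$, \emph{provided} $t$ exceeds the two explicit thresholds; the complexity of the class enters through those thresholds, not through a prefactor. So your ``third and only substantive step'' is not a step at all in the paper's argument. Moreover, going through Haussler with exponent $W=2d$, as you propose, would feed a different complexity parameter into Alexander's thresholds and is unlikely to reproduce precisely the constants $2^{33/2}$ and $2^{25}$ that appear in~\eqref{alexcond1} and~\eqref{alexcond2}; the paper's direct route avoids this bookkeeping entirely.
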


\emph{Remarks:~} This is a special case of~\cite[Theorem 2.8 (ii)]{alex}. In fact, Alexander's theorem is much more general than Lemma~\ref{lemmaalex}, insofar as it is formulated for a VC graph class of functions with $(d,k)$-constructible graph region class rather than for a VC class of sets. However, as noted on~\cite[p. 1049]{alex}, a VC class of index $d$ is always a VC graph class with $(d,1)$-constructible graph region class; moreover, every result formulated for a class of functions is clearly also applicable for a class of sets (by considering the indicator functions of the sets). Furthermore, for the parameters in Alexander's theorem we have chosen $\psi=2M^2$ (which is permitted by the remark after~\cite[Equation (1.7)]{alex}) and $\ve=1/2$. Since our functions $f$ are of the form $\mathds{1}_{C}(X_n)$ for some sets $C$ we have $0 \leq f(X_n) \leq 1$, and hence the variance of $f(X_n)$ is at most $1/4$; consequently, we can choose $\alpha = 1/4$. Finally, the estimate in Alexander's theorem is for 
the outer measure $\p^*$ rather than the measure $\p$; however, by assuming that $\mathcal{C}$ is countable we can avoid all measurability problems, which means that the set $\{ \sup |\alpha_N| > t \}$ is measurable and we can use the measure $\p$.\\

As mentioned before, Talagrand's inequality~\eqref{tala} is the key ingredient in the proof of~\eqref{hnwwb}, together with the well-known fact that the class of axis-parallel boxes in $[0,1]^d$ which have one vertex in the origin is a VC-class of index $d$. However, as observed by Heinrich \emph{et al.}, Talagrand's inequality~\eqref{tala} can be used to obtain a much more general \emph{inverse of the star-discrepancy} result for VC-classes. Using the notation from above, let $t_1, \dots, t_N$ be points in $X$ and set
\begin{equation*} \label{discc}
D_N^{(\mathcal{C},P)} (t_1, \dots, t_N) = \sup_{C \in \mathcal{C}} \left| \frac{1}{N} \sum_{n=1}^N \mathds{1}_C(t_n) - P(C) \right|.
\end{equation*}
Assume that $\mathcal{C}$ is a VC-class of index $d$. Then by~\eqref{tala} there exist points $t_1, \dots, t_N \in X$ such that
\begin{equation} \label{hnww2}
D_N^{(\mathcal{C},P)} (t_1, \dots, t_N) \leq c_{\textup{abs}} \frac{\sqrt{d}}{\sqrt{N}},
\end{equation}
which means that there always exists a point set of cardinality at most $c_{\textup{abs}} d \ve^{-2}$ achieving a $D_N^{(\mathcal{C},P)}$-discrepancy of at most $\ve$. Since Talagrand's inequality contains an unspecified constant, the same is the case for~\eqref{hnww2}. However, as we will see in the sequel, using Lemma~\ref{lemmaalex} instead of~\eqref{tala} we can obtain a fully explicit version of~\eqref{hnww2}, which we state as the following theorem.

\begin{theorem} \label{th3}
Let $(X,\mathcal{A},P)$ be a probability space, and let $\mathcal{C}$ be a class of subsets of $X$. Assume furthermore that $\mathcal{C}$ is a countable VC-class of index $d$. Then for any $N \geq 1$ there exist points $t_1, \dots, t_N \in X$ for which
$$
D_N^{(\mathcal{C},P)} (t_1, \dots, t_N)  \leq 2^{13} \frac{\sqrt{d}}{\sqrt{N}}.
$$
Consequently, for any $\ve \in (0,1)$ there exists a set of a most $N(\ve)=2^{26} d \ve^{-2}$ points whose $D_N^{(\mathcal{C},P)}$-discrepancy does not exceed $\ve$.
\end{theorem}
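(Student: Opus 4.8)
The plan is to use the probabilistic method together with Alexander's large deviation bound, Lemma~\ref{lemmaalex}. I would sample $X_1, \dots, X_N$ independently with law $P$; then by the very definition of $\alpha_N$ one has $D_N^{(\mathcal{C},P)}(X_1,\dots,X_N) = N^{-1/2} \sup_{C \in \mathcal{C}} |\alpha_N(C)|$, since $\frac{1}{N}\sum_{n=1}^N \mathds{1}_C(X_n) - P(C) = N^{-1/2}\alpha_N(C)$. It therefore suffices to show that with positive probability $\sup_{C \in \mathcal{C}} |\alpha_N(C)| \leq 2^{13}\sqrt{d}$, and then to extract a good realization $t_1, \dots, t_N$ by the pigeonhole principle.

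Before applying Lemma~\ref{lemmaalex} I would dispose of small $N$ by a triviality: for any points $D_N^{(\mathcal{C},P)} \leq 1$, because both $\frac{1}{N}\sum_{n=1}^N \mathds{1}_C(t_n)$ and $P(C)$ lie in $[0,1]$. Hence whenever $2^{13}\sqrt{d}/\sqrt{N} \geq 1$, i.e.\ $N \leq 2^{26} d$, the asserted bound holds for an arbitrary point set and nothing needs to be proved. So assume $N > 2^{26} d$ and set $t := 2^{13}\sqrt{d}$. The crux is then to verify that this choice of $t$ meets both hypotheses of Lemma~\ref{lemmaalex}. Condition~\eqref{alexcond2} amounts to $2^{26} d > 2^{25} d \log 4$, which holds since $\log 4 < 2$. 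For condition~\eqref{alexcond1}: since $N > 2^{26} d$ we have $N/(2d) > 2^{25} > e$, so the maximum in~\eqref{alexcond1} equals $N/(2d)$, and after dividing through the inequality becomes $\sqrt{N} > 2^{7/2}\sqrt{d}\,\log(N/(2d))$; substituting $u = N/(2d)$ this reads $\sqrt{u} > 8\log u$, which I would confirm holds for every $u \geq 2^{25}$ (the function $\sqrt{u} - 8\log u$ is increasing on $u > 256$ and positive at $u = 2^{25}$).

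Given these two conditions, Lemma~\ref{lemmaalex} yields $\p(\sup_C |\alpha_N(C)| > t) \leq 16 e^{-t^2} = 16\, e^{-2^{26} d} < 1$, so there is a realization of $X_1, \dots, X_N$, call it $t_1, \dots, t_N$, with $\sup_C |\alpha_N(C)| \leq t$, whence $D_N^{(\mathcal{C},P)}(t_1, \dots, t_N) \leq t/\sqrt{N} = 2^{13}\sqrt{d}/\sqrt{N}$. For the final assertion I would take $N$ to be the least integer with $N \geq 2^{26} d \ve^{-2}$, for which the displayed bound gives $D_N^{(\mathcal{C},P)} \leq 2^{13}\sqrt{d}/\sqrt{N} \leq \ve$. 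There is no genuine obstacle once Lemma~\ref{lemmaalex} is taken for granted; the only point demanding attention is that condition~\eqref{alexcond1} genuinely fails for small $N$, which is precisely why the reduction to the range $N > 2^{26}d$ (with the trivial bound covering the complementary range) is indispensable, and one must check that the two regimes together exhaust all $N \geq 1$.
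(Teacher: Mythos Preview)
Your proposal is correct and follows essentially the same approach as the paper: apply Lemma~\ref{lemmaalex} with $t = 2^{13}\sqrt{d}$, verify the two hypotheses, and dispose of small $N$ by the trivial bound $D_N^{(\mathcal{C},P)} \leq 1$. The only cosmetic difference is the threshold for the case split---the paper uses $\sqrt{N} \geq 96\sqrt{d}$ rather than your $N > 2^{26}d$---but both choices work and the argument is otherwise identical.
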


Theorem~\ref{th3} is the first fully explicit \emph{inverse of the discrepancy}-type result for VC classes. However, the value of the constants in Theorem~\ref{th3} is not very satisfactory, given the fact that in inequality~\eqref{hnwwb} (that is, in the case of the star-discrepancy on $[0,1]^d$) we may choose $c_{\textup{abs}} = 100$ (see~\cite{aist}). One way for a possible improvement of the constants in Theorem~\ref{th3} would of course be to try to find a explicit version of Talagrand's inequality~\eqref{tala}. Another possible way would be to use Massart's~\cite{mass} explicit version of an other large deviations bounds for empirical processes due to Talagrand~\cite{tala2}; however, this bound involves the quantity 
\begin{equation} \label{exp}
\E \left( \sup_{C \in \mathcal{C}} |\alpha_N(C)| \right),
\end{equation}
which depends on the entropy of the class $\mathcal{C}$. Using Haussler's entropy bounds for VC classes and the method for estimating~\eqref{exp} which is indicated in~\cite[Section 5.2]{gna} it might be possible to obtain an improved version of Theorem~\ref{th3}. For the problem of estimating~\eqref{exp} in the context of low-discrepancy point sets on $[0,1]^d$ see~\cite{aistho, doerr}.\\

To conclude this section, we repeat an argument from~\cite{hnww} to show how to apply the results obtained for the discrepancy $D_N^{(\mathcal{C},P)}$ in Theorem~\ref{th3} to the case of the discrepancy $D_N^*(~\cdot~ ; \mu)$, as defined in~\eqref{discmu}. Clearly, using the notation from the present section, we have to choose $X = [0,1]^d$ and $P = \mu$. However, we cannot choose $\mathcal{C}$ as the class of all axis-parallel boxes in $[0,1]^d$ having one vertex at the origin, as this class obviously is not countable. Instead we choose $\mathcal{C}$ as the class of all such boxes which have only rational coordinates. Restricting the test sets $A$ in definition~\eqref{discmu} to boxes having only rational coordinates does not change the value of the star-discrepancy. Consequently, with these settings the discrepancies $D_N^{(\mathcal{C},P)}$ and $D_N^*(~\cdot~; \mu)$ coincide, and Theorem~\ref{th3} includes an \emph{inverse of the star-discrepancy} result for the discrepancy $D_N^*(~\cdot~; \mu)$, which is 
stated as a corollary below.

\begin{corollary} \label{co1}
Let $\mu$ be any non-negative, normalized Borel measure on $[0,1]^d$. Then for any $N \geq 1$ there exist points $x_1, \dots, x_N \in [0,1]^d$ such that
$$
D_N^*(x_1, \dots, x_N;\mu) \leq 2^{13} \frac{\sqrt{d}}{\sqrt{N}}.
$$
Furthermore, for any $\ve \in (0,1)$ there exists a set of a most $N(\ve)=2^{26} d \ve^{-2}$ points whose star-discrepancy with respect to $\mu$ does not exceed $\ve$.
\end{corollary}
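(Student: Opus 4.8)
The plan is to read off Corollary~\ref{co1} from Theorem~\ref{th3} by specializing the abstract framework, exactly as indicated in the discussion preceding the corollary. I would apply Theorem~\ref{th3} with $X=[0,1]^d$ (equipped with its Borel $\sigma$-algebra), with $P=\mu$, and with $\mathcal{C}$ taken to be the family of all half-open axis-parallel boxes $\prod_{j=1}^{d}[0,a_j)\subseteq[0,1]^d$ whose coordinates $a_1,\dots,a_d$ are rational. Three points then have to be checked: that this $\mathcal{C}$ is countable, that it is a VC-class of index $d$, and that passing from the class of \emph{all} origin-anchored boxes to $\mathcal{C}$ does not change the star-discrepancy, i.e.\ that $D_N^{(\mathcal{C},\mu)}(x_1,\dots,x_N)=D_N^*(x_1,\dots,x_N;\mu)$ for every point set $x_1,\dots,x_N$.

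Countability is immediate, as $\mathcal{C}$ is parametrized by $(\Q\cap[0,1])^d$. For the VC index, $\mathcal{C}$ still shatters the $d$-element set $\{e_1/2,\dots,e_d/2\}$, where $e_1,\dots,e_d$ is the standard basis of $\R^d$: for $S\subseteq\{1,\dots,d\}$ the box with $a_j=1$ for $j\in S$ and $a_j=1/4$ for $j\notin S$ belongs to $\mathcal{C}$ and contains precisely the points $e_j/2$ with $j\in S$. On the other hand $\mathcal{C}$ is contained in the class of all origin-anchored axis-parallel boxes, which (as recalled in Section~\ref{sec2}) is a VC-class of index $d$, and the VC index cannot increase when passing to a subclass; hence $\mathcal{C}$ has index exactly $d$. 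Theorem~\ref{th3} therefore applies and yields points $t_1,\dots,t_N\in[0,1]^d$ with $D_N^{(\mathcal{C},\mu)}(t_1,\dots,t_N)\le 2^{13}\sqrt{d}/\sqrt{N}$.

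The only step requiring an actual argument is the identity $D_N^{(\mathcal{C},\mu)}=D_N^*(\,\cdot\,;\mu)$. Here I would write the star-discrepancy~\eqref{discmu} as the maximum of the two one-sided suprema, over $a\in[0,1]^d$, of the function $\varphi(a):=\frac1N\sum_{n=1}^N\mathds{1}_{\prod_j[0,a_j)}(x_n)-\mu\big(\prod_j[0,a_j)\big)$ and of $-\varphi(a)$. The counting term is locally constant as $a$ approaches any point $a_0$ from the ``lower-left'' (for $a\le a_0$ componentwise, $x_n$ lies in the box at $a$ iff it lies in the box at $a_0$, once $a$ is close enough), and the measure term is left-continuous in the same sense by monotone convergence, since $\prod_j[0,a_j)\uparrow\prod_j[0,a_{0,j})$ as $a\uparrow a_0$. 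Hence $\varphi$, and likewise $-\varphi$, is continuous along such approaches, so its supremum over $[0,1]^d$ equals its supremum over the dense set $\Q^d\cap[0,1]^d$; consequently both one-sided suprema, and therefore $D_N^*(x_1,\dots,x_N;\mu)$, are unchanged when the test boxes are restricted to rational coordinates. (With the closed-box convention the same works with right-continuity and approaches from the ``upper-right''.) Setting $x_n:=t_n$ proves the first assertion.

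For the second assertion one only substitutes: given $\ve\in(0,1)$, taking $N=\lceil 2^{26}d\ve^{-2}\rceil$, so that $N\ge 2^{26}d\ve^{-2}$, gives $2^{13}\sqrt{d}/\sqrt{N}\le\ve$, whence the point set constructed above has at most $2^{26}d\ve^{-2}$ points (up to rounding up to an integer) and star-discrepancy with respect to $\mu$ at most $\ve$. I do not expect a genuine obstacle anywhere in this argument; the only mildly fussy part is keeping track of one-sided continuity in the rational-restriction step, and that point was in any case already asserted in the text immediately before the corollary.
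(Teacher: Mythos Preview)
Your proposal is correct and follows essentially the same route as the paper: the paper's ``proof'' is the paragraph immediately preceding the corollary, where one takes $X=[0,1]^d$, $P=\mu$, and $\mathcal{C}$ the class of origin-anchored boxes with rational coordinates, observes that this countable restriction does not alter the star-discrepancy, and then invokes Theorem~\ref{th3}. You have simply supplied the details the paper leaves implicit---the explicit verification of the VC index and the one-sided continuity argument justifying the rational-restriction step.
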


\section{Numerical integration of discontinuous functions} \label{sec3}

The Koksma--Hlawka inequality~\eqref{kokshla} can be rewritten for smooth functions $f$ on $[0,1]^d$ in the form
\begin{equation} \label{kh2}
\left| \frac{1}{N} \sum_{n=1}^N f(x_n) - \int_{[0,1]^d} f(x)~dx \right| \leq D_N^*(x_1, \dots, x_N) \cdot V_{1,1}(f),
\end{equation}
where
\begin{equation} \label{varnormal}
V_{1,1}(f) = \sum_{\mathfrak{u} \subset \{1, \dots, d\}} \int_{[0,1]^{|\mathfrak{u}|}} \left| \frac{\partial^{|\mathfrak{u}|}f}{\partial x_\mathfrak{u}}(x_\mathfrak{u};1) \right| dx_{\mathfrak{u}}.
\end{equation}
Here the sum is extended over all subsets $\mathfrak{u}$ of $\{1, \dots, d\}$. The symbol $x_\mathfrak{u}$ means that only those coordinates of $x$ are considered whose index is contained in $\mathfrak{u}$, and $(x_\mathfrak{u};1)$ means that all those components of $x$ whose index is not contained in $\mathfrak{u}$ are replaced by 1. The quantity $V_{1,1}(f)$ is the norm corresponding to an inner product in a certain reproducing kernel Hilbert space of Sobolev type; for more details, see for example~\cite[Chapter 3]{dksh} or~\cite[Chapter 2]{dpd}.\\

The inequality~\eqref{kh2} allows a nice function-analytic interpretation of the error of Quasi-Monte Carlo integration for smooth functions; however, it is not of any use if the function $f$ has discontinuities. The Koksma--Hlawka inequality in the form~\eqref{kokshla} is slightly more flexible, since it also yields upper bounds for the numerical integration of discontinuous functions. In particular, this is the case if $f$ is the indicator function of an axis-parallel box having one vertex at the origin (which is not surprising in consideration of the definition of the star-discrepancy). However, the variation of a function $f$ in the sense of Hardy and Krause can be infinity even if $f$ is a rather simple function, for example the indicator function of a polytope which does not only have axis-parallel faces.\\

Let $\Omega$ be a subset of $[0,1]^d$, and assume that $f(x)$ is of the form 
\begin{equation} \label{omegacl}
g(x) \cdot \mathds{1}_\Omega(x),
\end{equation}
where $g$ is a smooth function. Let $x_1, \dots, x_N$ be points in $[0,1]^d$. Can we then obtain a bound for the QMC integration error
\begin{equation} \label{err}
\left| \frac{1}{N} \sum_{n=1}^N f(x_n) - \int_{[0,1]^d} f(x)~dx \right|
\end{equation}
in terms of an (appropriately defined) discrepancy of $x_1, \dots, x_N$, which holds \emph{uniformly} for all possible sets $\Omega$? Probably not without strong a-priori restrictions on the class of sets in which $\Omega$ has to lie. For example, even if we set $g \equiv 1$ but allow $\Omega$ to be any subset of $[0,1]^d$, there will always be a choice of $\Omega$ such that the error in~\eqref{err} equals 1 (namely if we take $\Omega = [0,1]^d \backslash \{x_1, \dots, x_N\}$). Also if we impose some restrictions on $\Omega$ we will not necessarily get any good bounds for the QMC error~\eqref{err}: For example, if $g \equiv 1$ and $\Omega$ can be any convex set, then independent of the point set $x_1, \dots, x_N$ the error in~\eqref{err} will not be better than $\gg N^{-2/(d+1)}$ for some choice of $\Omega$ (a variant of the Koksma--Hlawka inequality for functions having convex super-level sets has actually recently been proved by Harman~\cite{harm}). In other words, we cannot expect to find a point set 
which gives good error bounds for QMC integration for functions of the form~\eqref{omegacl} which hold \emph{uniformly} for all sets $\Omega$ from a large class of subsets of $[0,1]^d$.\\

However, the problem of numerical integration of functions having discontinuities cannot be entirely neglected, since such functions arise in a natural way in one of the main areas of applications of QMC methods. More precisely, QMC methods are a standard tool for pricing financial derivatives in financial mathematics, and there are classes of such derivatives, such as digital options and barrier options, which have a discontinuous payoff function. In a standard model of financial mathematics the value of the underlying asset is assumed to follow a geometric Brownian motion, and the value of a financial derivative is given by the expected value of the payoff under the risk-neutral measure. The Brownian motion is discretized into $d$ intervals, and points in $[0,1]^d$ are used, together with a suitable path-generating method, to simulate a path of the geometric Brownian motion. Accordingly, the payoff of the financial derivative can be estimated by the $d$-dimensional integral of a function $f$, where the 
properties of $f$ depend on both the payoff of the derivative and on the path-generating method which is used. For the basic principles of this method see Glasserman~\cite{glass}. The influence of the path-generating method on the effectiveness of QMC integration has been studied in detail by Wang and Tan~\cite{wangtan1,wangtan2}.\\

In the sequel we will show that once we assume that the set $\Omega$ (where the discontinuities occur) is \emph{fixed}, then we can find QMC point sets with asymptotic convergence rate almost as good as~\eqref{lowd}. The key ingredient (besides Theorem~\ref{th1}) is a recent variant of the Koksma--Hlawka inequality for general integration domains, which is due to Brandolini, Colzani, Gigante and Travaglini~\cite{bcgt}. They proved (amongst many other results) the following inequality, which we state as a lemma.

\begin{lemma} \label{lemmabcgt}
Let $f$ be a continuous function on $\mathbb{T}^d$, and let $\Omega$ be a Borel subset of $[0,1]^d$. Then for $x_1, \dots, x_N$ in $[0,1]^d$ we have
$$
\left| \frac{1}{N} \sum_{n=1}^N (f \cdot \mathds{1}_\Omega) (x_n) - \int_\Omega f(x)~dx \right| \leq D_N^{\Omega}(x_1, \dots, x_N) \cdot V(f),
$$
where
\begin{equation} \label{discomega}
D_N^{(\Omega)} (x_1, \dots, x_N) = 2^d \sup_{A \subset [0,1]^d} \left| \frac{1}{N} \sum_{n=1}^N \mathds{1}_{\Omega \cap A} (x_n) - \lambda(\Omega \cap A) \right|
\end{equation}
and
\begin{equation} \label{bcgtvar}
V(f) = \sum_{\mathfrak{u} \subset \{1, \dots, d\}} 2^{d-|\mathfrak{u}|} \int_{[0,1]^d} \left| \frac{\partial^{|\mathfrak{u}|}}{\partial x_\mathfrak{u}} f(x) \right| dx,
\end{equation}
where $\frac{\partial^{|\mathfrak{u}|}}{\partial x_\mathfrak{u}} f(x)$ is the partial derivative of $f$ with respect to those components of $x$ with index in $\mathfrak{u}$, and the supremum is taken over all axis-parallel boxes $A$.
\end{lemma}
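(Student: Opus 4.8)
The plan is to prove Lemma~\ref{lemmabcgt} by running the classical variational argument behind the Koksma--Hlawka inequality on the torus $\mathbb{T}^d$, while keeping track of the measurable set $\Omega$. First I would dispose of the trivial case $V(f)=\infty$ (there the left-hand side is finite, since $f$ is continuous and $\Omega\subseteq[0,1]^d$, so the inequality holds automatically), and then reduce to $f\in C^\infty(\mathbb{T}^d)$ by mollification: if $V(f)<\infty$ then all mixed weak partials $\frac{\partial^{|\mathfrak{u}|}}{\partial x_{\mathfrak{u}}}f$ lie in $L^1([0,1]^d)$, a standard smoothing on the torus produces $f_\ve\to f$ uniformly with $\frac{\partial^{|\mathfrak{u}|}}{\partial x_{\mathfrak{u}}}f_\ve\to\frac{\partial^{|\mathfrak{u}|}}{\partial x_{\mathfrak{u}}}f$ in $L^1$, and both sides of the asserted inequality are continuous under this convergence. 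For smooth periodic $f$ I would use the one-dimensional identity $g(y)=\int_0^1 g(t)\,dt+\int_0^1\bar K(y,t)\,g'(t)\,dt$, where $\bar K(y,t)=\{t-y\}-\tfrac12$ is the periodic Bernoulli kernel and the boundary terms produced by the integration by parts vanish by periodicity, and apply it successively in each of the $d$ coordinates. Expanding the resulting product of one-dimensional operators gives the tensor representation
\begin{equation*}
f(x)=\sum_{\mathfrak{u}\subseteq\{1,\dots,d\}}\int_{[0,1]^d}\Bigl(\prod_{j\in\mathfrak{u}}\bar K(x_j,t_j)\Bigr)\frac{\partial^{|\mathfrak{u}|}}{\partial x_{\mathfrak{u}}}f(t)\,dt .
\end{equation*}

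Next I would apply the linear functional $L_\Omega(h)=\frac1N\sum_{n=1}^N h(x_n)\mathds{1}_\Omega(x_n)-\int_\Omega h(y)\,dy$ to both sides and interchange $L_\Omega$ with the (bounded) $t$-integration; this reduces everything to estimating $L_\Omega(h_{t,\mathfrak{u}})$ for the product kernels $h_{t,\mathfrak{u}}(y)=\prod_{j\in\mathfrak{u}}\bar K(y_j,t_j)$, since then
\begin{equation*}
\Bigl|\tfrac1N\sum_{n=1}^N (f\mathds{1}_\Omega)(x_n)-\int_\Omega f(y)\,dy\Bigr|\le\sum_{\mathfrak{u}}\int_{[0,1]^d}\Bigl|\tfrac{\partial^{|\mathfrak{u}|}}{\partial x_{\mathfrak{u}}}f(t)\Bigr|\,\bigl|L_\Omega(h_{t,\mathfrak{u}})\bigr|\,dt .
\end{equation*}
Writing $\Sigma$ for the supremum over axis-parallel boxes $A$ of $\bigl|\tfrac1N\sum_n\mathds{1}_{\Omega\cap A}(x_n)-\lambda(\Omega\cap A)\bigr|$, so that $D_N^{(\Omega)}=2^d\Sigma$ and $|L_\Omega(\mathds{1}_A)|\le\Sigma$ for every box $A$, the lemma would follow from the bound $|L_\Omega(h_{t,\mathfrak{u}})|\le 2^{|\mathfrak{u}|}\Sigma$: indeed then $|L_\Omega(h_{t,\mathfrak{u}})|\le 2^{|\mathfrak{u}|-d}D_N^{(\Omega)}\le 2^{d-|\mathfrak{u}|}D_N^{(\Omega)}$ (using $|\mathfrak{u}|\le d$), and summing these bounds against the $L^1$-norms of the partials reproduces precisely $D_N^{(\Omega)}V(f)$.

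To prove $|L_\Omega(h_{t,\mathfrak{u}})|\le 2^{|\mathfrak{u}|}\Sigma$ I would use a layer-cake decomposition of each factor. Since $\|\bar K(\cdot,t_j)\|_\infty=\tfrac12$ and, for each $s\in(0,\tfrac12)$, the level sets $\{\bar K(\cdot,t_j)>s\}$ and $\{\bar K(\cdot,t_j)<-s\}$ are each a union of at most two subintervals of $[0,1]$, one can write $\bar K(y_j,t_j)=\int_0^{1/2}\mathds{1}_{P_j^+(s_j)}(y_j)\,ds_j-\int_0^{1/2}\mathds{1}_{P_j^-(s_j)}(y_j)\,ds_j$ with each $P_j^{\pm}$ a union of at most two intervals. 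Multiplying these representations over $j\in\mathfrak{u}$ exhibits $h_{t,\mathfrak{u}}$ as a signed average, over the $2^{|\mathfrak{u}|}$ choices of sign and over $[0,\tfrac12]^{\mathfrak{u}}$, of indicator functions $\mathds{1}_Q$ in which each $Q$ is a product of unions of $\le 2$ intervals, hence a disjoint union of at most $2^{|\mathfrak{u}|}$ axis-parallel boxes. Since $L_\Omega$ is additive over disjoint sets, $|L_\Omega(\mathds{1}_Q)|\le 2^{|\mathfrak{u}|}\Sigma$, and combining the $2^{|\mathfrak{u}|}$ sign patterns, the factor $(\tfrac12)^{|\mathfrak{u}|}$ from integrating over $[0,\tfrac12]^{\mathfrak{u}}$, and this box bound yields $|L_\Omega(h_{t,\mathfrak{u}})|\le 2^{|\mathfrak{u}|}\Sigma$.

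The main obstacle is this last step: analysing correctly the geometry of the level sets of a tensor product of periodic Bernoulli kernels and arranging the decomposition into $\le 2^{|\mathfrak{u}|}$ axis-parallel boxes so that additivity of $L_\Omega$ over disjoint pieces can be invoked. It is also where the constants matter — a cruder treatment of $\bar K$ (for instance splitting it into a linear part plus a jump) would give a per-coordinate factor like $5/2$ rather than $2$, and $(5/2)^{|\mathfrak{u}|}$ is too large once $|\mathfrak{u}|$ is close to $d$; using the layer-cake decomposition, which respects the bound $\|\bar K\|_\infty=\tfrac12$, is what keeps the factor down to $2$ and makes the final arithmetic $|\mathfrak{u}|\le d$ suffice. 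Everything else — the reduction to smooth $f$, the product identity, and the interchange of summation and integration — is routine; I note that the original argument of Brandolini \emph{et al.} may instead proceed via Fourier series, but the variational route sketched here delivers the inequality in exactly the stated form.
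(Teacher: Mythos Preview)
The paper does not give its own proof of this lemma: it is quoted verbatim from Brandolini, Colzani, Gigante and Travaglini~\cite{bcgt} and used as a black box, so there is no in-paper argument to compare against. Your sketch is therefore an independent proof, and its logic is sound. The tensorised Bernoulli-kernel identity on $\mathbb{T}^d$ is correct for smooth periodic $f$, the interchange of $L_\Omega$ with the $t$-integration is justified because the integrands are bounded, and your layer-cake analysis of $\bar K(\cdot,t_j)$ is accurate: for each $s\in(0,\tfrac12)$ the super- and sub-level sets are genuinely unions of at most two subintervals of $[0,1]$, so the product set $Q$ splits into at most $2^{|\mathfrak{u}|}$ disjoint axis-parallel boxes and $|L_\Omega(\mathds{1}_Q)|\le 2^{|\mathfrak{u}|}\Sigma$ follows by additivity. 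The arithmetic $2^{|\mathfrak{u}|}\cdot(1/2)^{|\mathfrak{u}|}\cdot 2^{|\mathfrak{u}|}=2^{|\mathfrak{u}|}$ and the final passage $2^{|\mathfrak{u}|-d}\le 2^{d-|\mathfrak{u}|}$ are both fine.

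One remark worth recording: your argument actually yields the sharper estimate
\[
\Bigl|\tfrac1N\sum_{n}(f\mathds{1}_\Omega)(x_n)-\int_\Omega f\Bigr|\;\le\;\Sigma\sum_{\mathfrak{u}}2^{|\mathfrak{u}|}\bigl\|\partial_{\mathfrak{u}}f\bigr\|_{L^1},
\]
with $\Sigma=2^{-d}D_N^{(\Omega)}$, and you then relax $2^{|\mathfrak{u}|}\Sigma=2^{|\mathfrak{u}|-d}D_N^{(\Omega)}$ to $2^{d-|\mathfrak{u}|}D_N^{(\Omega)}$ only to match the stated form of $V(f)$. That step throws away a factor $4^{d-|\mathfrak{u}|}$; it is harmless for the lemma as stated, but it means your route in fact proves something stronger than what Brandolini \emph{et al.}\ record. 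Whether their original argument (which you correctly flag may go via Fourier analysis rather than the variational route) also delivers this sharper constant is a separate question; for the purposes of this paper, either version suffices.
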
 

As a consequence we get the following theorem.

\begin{theorem} \label{th4}
Let $f$ by a continuous function on $\mathbb{T}^d$, and let $\Omega$ be a Borel subset of $[0,1]^d$ which has positive $d$-dimensional Lebesgue measure $\lambda$. Then for any $N \geq 1$ there exist points $x_1, \dots, x_N$ in $[0,1]^d$ for which
\begin{equation} \label{omeg}
\left| \frac{1}{N} \sum_{n=1}^N f (x_n) - \frac{1}{\lambda(\Omega)} \int_\Omega f(x)~d\lambda(x) \right| \leq \frac{63 \sqrt{d} ~4^d \left(2 + \log_2 N \right)^{(3d+1)/2} + 1}{N} + \frac{\|f\|_\infty}{N}
\end{equation}
where $V(f)$ is defined in~\eqref{bcgtvar}.
\end{theorem}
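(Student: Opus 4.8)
The plan is to deduce the estimate from Theorem~\ref{th1} together with the Koksma--Hlawka inequality of Lemma~\ref{lemmabcgt}. Since $\lambda(\Omega)>0$, the set function $\mu(A)=\lambda(\Omega\cap A)/\lambda(\Omega)$ is a non-negative normalized Borel measure on $[0,1]^d$, and it is carried by $\Omega$, i.e.\ $\mu([0,1]^d\setminus\Omega)=0$. Put $M=\lfloor\lambda(\Omega)N\rfloor$ and apply Theorem~\ref{th1} to $\mu$ with $M$ points, obtaining $x_1,\dots,x_M\in[0,1]^d$ with $D_M^*(x_1,\dots,x_M;\mu)\le 63\sqrt d\,(2+\log_2 M)^{(3d+1)/2}/M$; perturbing each point inside the cell of the dyadic decomposition used in the proof of Theorem~\ref{th1} (a cell carrying positive $\mu$-mass meets $\Omega$ in a set of positive measure), we may assume $x_1,\dots,x_M\in\Omega$. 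Finally pick a point $p$ with $f(p)=\tfrac1{\lambda(\Omega)}\int_\Omega f\,d\lambda$, which is possible because $f$ is continuous on the connected set $\mathbb T^d$ and this value lies between $\inf f$ and $\sup f$, and set $x_{M+1}=\dots=x_N=p$.

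The next, and main, step is to bound $D_N^{(\Omega)}(x_1,\dots,x_N)$. For any axis-parallel box $A$ the copies of $p$ contribute nothing to $\mathds{1}_{\Omega\cap A}$ when $p\notin\Omega$ (and the case $p\in\Omega$ is no worse), while $\mathds{1}_{\Omega\cap A}(x_j)=\mathds{1}_A(x_j)$ for $j\le M$ because $x_j\in\Omega$; hence $\tfrac1N\sum_{n=1}^N\mathds{1}_{\Omega\cap A}(x_n)=\tfrac MN\cdot\tfrac1M\sum_{j=1}^M\mathds{1}_A(x_j)$. Writing an arbitrary box as the usual alternating inclusion--exclusion sum of $2^d$ boxes anchored at the origin reduces the relevant supremum to anchored boxes at the cost of a factor $2^d$, and for anchored boxes Theorem~\ref{th1} bounds $|\tfrac1M\sum_{j=1}^M\mathds{1}_A(x_j)-\mu(A)|$ by $63\sqrt d\,(2+\log_2 M)^{(3d+1)/2}/M$; the factor $M/N$ then cancels the $1/M$, and since $\lambda(\Omega\cap A)=\lambda(\Omega)\mu(A)$, $\mu(A)\le1$, $|\tfrac MN-\lambda(\Omega)|\le\tfrac1N$ and $M\le N$, one gets
$$\sup_{A}\Bigl|\tfrac1N\textstyle\sum_{n=1}^N\mathds{1}_{\Omega\cap A}(x_n)-\lambda(\Omega\cap A)\Bigr|\le \frac{63\sqrt d\,2^d(2+\log_2 N)^{(3d+1)/2}+1}{N}.$$
Multiplying by the factor $2^d$ built into the definition of $D_N^{(\Omega)}$ produces the second power of $2^d$, i.e.\ the $4^d$ of the statement (up to the precise value of the lower-order constant in place of the $1/N$).

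For the last step, apply Lemma~\ref{lemmabcgt} to $f$ and $\Omega$. Since $x_1,\dots,x_M\in\Omega$ and the remaining points equal $p\notin\Omega$, we have $\tfrac1N\sum_{n=1}^N(f\cdot\mathds{1}_\Omega)(x_n)=\tfrac1N\sum_{j=1}^M f(x_j)$, so Lemma~\ref{lemmabcgt} gives $\bigl|\tfrac1N\sum_{j=1}^M f(x_j)-\int_\Omega f\,d\lambda\bigr|\le D_N^{(\Omega)}(x_1,\dots,x_N)\,V(f)$. Adding the contribution of the auxiliary point, $\tfrac1N\sum_{n=1}^N f(x_n)=\tfrac1N\sum_{j=1}^M f(x_j)+\tfrac{N-M}{N}f(p)$, and using $\int_\Omega f\,d\lambda=\lambda(\Omega)f(p)$ together with $\bigl|\tfrac{N-M}{N}-(1-\lambda(\Omega))\bigr|\le\tfrac1N$ (whence $\bigl|\tfrac{N-M}{N}f(p)-(1-\lambda(\Omega))f(p)\bigr|\le\|f\|_\infty/N$), the contributions $\lambda(\Omega)f(p)$ and $(1-\lambda(\Omega))f(p)$ combine to $f(p)=\tfrac1{\lambda(\Omega)}\int_\Omega f\,d\lambda$, leaving an error of at most $\|f\|_\infty/N$. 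Collecting the estimates yields
$$\Bigl|\frac1N\sum_{n=1}^N f(x_n)-\frac1{\lambda(\Omega)}\int_\Omega f\,d\lambda\Bigr|\le D_N^{(\Omega)}(x_1,\dots,x_N)\,V(f)+\frac{\|f\|_\infty}{N},$$
which is the asserted bound, with the factor $V(f)$ from~\eqref{bcgtvar} multiplying the main term as named after the statement. The crux is the second step: one must pass from the control of the empirical measure on \emph{boxes} relative to $\mu$ that Theorem~\ref{th1} supplies to control on the possibly very irregular sets $\Omega\cap A$ relative to $\lambda$ appearing in $D_N^{(\Omega)}$, while simultaneously absorbing the normalizing factor $\lambda(\Omega)$ and the rounding error $M\neq\lambda(\Omega)N$; the two factors $2^d$ met along the way --- one from reducing general boxes to anchored ones, one from the definition of $D_N^{(\Omega)}$ --- are precisely what create the $4^d$ in the final estimate.
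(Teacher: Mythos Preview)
Your proof has two genuine gaps.

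\textbf{First}, the perturbation argument for ensuring $x_1,\dots,x_M\in\Omega$ does not work as stated. The proof of Theorem~\ref{th1} does not proceed via a geometric dyadic decomposition of $[0,1]^d$ into cells inside which one could perturb. Rather, it first approximates $\mu$ by a discrete measure supported on points $z_1,\dots,z_K\in[0,1]^d$ (via Corollary~\ref{co1}), and then Lemma~\ref{lemma2} selects $N$ of these $z_k$; the ``cells'' $\bigcap_s F_{i_s}^{(s)}$ in Lemma~\ref{lemma2} are subsets of the finite set $\{z_1,\dots,z_K\}$, not geometric regions of $[0,1]^d$. There is nothing to perturb unless one already knows that the $z_k$ lie in $\Omega$. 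The paper secures this by re-running the empirical-process argument of Theorem~\ref{th3} on the probability space $(\Omega,\mathcal A,\mu)$ with the trace $\sigma$-algebra and the class $\{A\cap\Omega:A\text{ anchored box}\}$ (still a VC-class of index at most $d$), so that the approximating points $z_k$, and hence the selected $x_n$, automatically lie in $\Omega$. This is a real step, not a perturbation.

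\textbf{Second}, your treatment of the auxiliary point $p$ is incorrect. You need $p\notin\Omega$ for the discrepancy computation: if $p\in\Omega$ and $A$ is any box containing $p$, the $N-M$ copies of $p$ contribute $(N-M)/N\approx 1-\lambda(\Omega)$ to $\tfrac1N\sum_n\mathds 1_{\Omega\cap A}(x_n)$, which can be of order one and destroys the bound on $D_N^{(\Omega)}$. The parenthetical ``the case $p\in\Omega$ is no worse'' is simply false. But the intermediate value theorem only produces \emph{some} $p\in\mathbb T^d$ with $f(p)=\tfrac1{\lambda(\Omega)}\int_\Omega f\,d\lambda$; it gives no control over whether $p\in\Omega$. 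For instance, if $\Omega=[0,1]^d\setminus\{q\}$, the only candidate outside $\Omega$ is $q$, and for generic $f$ one has $f(q)\neq\tfrac1{\lambda(\Omega)}\int_\Omega f$, forcing every admissible $p$ into $\Omega$.

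The paper avoids this by taking the dual route: it keeps all $N$ low-discrepancy points inside $\Omega$, picks \emph{any} point $P\notin\Omega$ (possible whenever $\Omega\neq[0,1]^d$, the remaining case being immediate), and pads \emph{upwards} with $M-N\ge 0$ copies of $P$, where $M=\lceil N/\lambda(\Omega)\rceil\ge N$. Since $(f\cdot\mathds 1_\Omega)(P)=0$ regardless of $f$, the dummy copies are harmless both in the discrepancy $D_M^{(\Omega)}$ and in the application of Lemma~\ref{lemmabcgt}, and no special value of $f$ at $P$ is needed. Your idea of padding \emph{downwards} with a point chosen to match the integral average is attractive, but it cannot be made to work without an assumption on $\Omega$ that the theorem does not grant.
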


The purpose of Theorem~\ref{th4} is to show that \emph{in principle} there exist QMC point sets which yield fast asymptotic convergence rates for QMC integration for functions of the form~\eqref{omegacl}. Note, however, that the point set $x_1, \dots, x_N$ in Theorem~\ref{th4} depends on $\Omega$, and consequently we would have to construct \emph{different} point sets for functions having their discontinuities at different positions (which is not surprising, as noted at the beginning of this section). Thus for calculating the risk-neutral price of a financial derivative with discontinuous payoff the point set yielding small QMC integration error has to be chosen with respect to both the barrier (where the discontinuity occurs) and the path-generating method (but \emph{not} depending on the value of the payoff). Additionally equation~\eqref{omeg} contains the term $\lambda(\Omega)$, which might be unknown. Furthermore the proof of Theorem~\ref{th1} is of a purely probabilistic/combinatorial nature and \emph{
not} constructive, so it remains an open problem how to construct such point sets (possibly in a parametric form, depending on $\Omega$). Thus Theorem~\ref{th4} is far away from a possible implementation, and is mostly of theoretical interest.\\

Note that the assumption $\lambda(\Omega) >0$ in Theorem~\ref{th4} is no real restriction, since in the case $\lambda(\Omega)=0$ we clearly have
$$
\int_\Omega f(x)~d\lambda(x)=0
$$
for any function $f(x)$, which makes numerical integration pointless. Note also that Theorem~\ref{th4} is \emph{not} a trivial consequence of Theorem~\ref{th1}, combined with Lemma~\ref{lemmabcgt}, since the two notions of discrepancy in~\eqref{discmu} and~\eqref{discomega} are defined in a different way. We will choose $\mu$ in such a way that
\begin{equation} \label{mua}
\mu(A) = \frac{\lambda(\Omega \cap A)}{\lambda(\Omega)}. 
\end{equation}
Then $\mu$ is a normalized measure. However, then the discrepancy defined in~\eqref{discmu} is given by
\begin{equation} \label{discmumod}
\sup_{A \subset [0,1]^d} \left| \frac{1}{N} \sum_{n=1}^N \mathds{1}_A (x_n) - \frac{\lambda(\Omega \cap A)}{\lambda(\Omega)} \right|,
\end{equation}
which is not the same as~\eqref{discomega}; in particular, these discrepancies are only comparable for our point set from Theorem~\ref{th1} if we can guarantee that all points constructed there for the measure $\mu$ given by~\eqref{mua} are contained in $\Omega$ (which actually is the case, as the proof of Theorem~\ref{th4} will show).

\section{Open problems} \label{secop}

As mentioned in~\eqref{lowd} for any $d \geq 1$ and $N \geq 1$ there exists a (finite) point set $x_1, \dots, x_N$ whose star-discrepancy (with respect to the uniform measure on $[0,1]^d$) is bounded by
\begin{equation} \label{disc1}
D_N^*(x_1, \dots, x_N) \ll (\log N)^{d-1} N^{-1};
\end{equation}
here the implied constant depends on $d$. Furthermore, there exists an infinite sequence $(x_n)_{n\geq 1}$ of points from $[0,1]^d$ whose star-discrepancy is bounded by
\begin{equation} \label{disc2}
D_N^*(x_1, \dots, x_N) \ll (\log N)^{d} N^{-1} \qquad \textrm{for all $N \geq 1$},
\end{equation}
where again the implied constant depends on $d$. It is unknown if these upper bounds are optimal, and the problem of finding the optimal convergence rates in these inequalities is called the \emph{Great Open Problem} of discrepancy theory. The best known lower bound is
$$
D_N^*(x_1, \dots, x_N) \gg (\log N)^{(d-1)/2 + \eta(d)}
$$
for some $\eta>0$, and for any (finite) set of $N$ points in $[0,1]^d$, due to Bilyk, Lacey and Vagharshakyan~\cite{blv} (see~\cite{bor} for a survey). Obviously the upper bounds in Theorem~\ref{th1} and~\ref{th2} are of asymptotic order
\begin{equation} \label{disc3}
D_N^*(x_1, \dots, x_N;\mu) \ll (\log N)^{(3d+1)/2} N^{-1}
\end{equation}
for a finite point set $x_1, \dots, x_N$, and 
\begin{equation} \label{disc4}
D_N^*(x_1, \dots, x_N;\mu) \ll (\log N)^{(3d+4)/2} N^{-1}  \qquad \textrm{for all $N \geq 1$}
\end{equation}
for an infinite sequence $(x_n)_{n \geq 1}$; it would be very interesting to know if the asymptotic upper bounds in these inequalities can be improved to~\eqref{disc1} and~\eqref{disc2}, respectively. As mentioned in Section~\ref{sechy}, a proof of the Komlos conjecture (or the Beck--Fiala conjecture) would imply that the exponents of the logarithmic terms in~\eqref{disc3} and~\eqref{disc4} could both be reduced by $1/2$, which would still leave a gap to~\eqref{disc1} and~\eqref{disc2}.\\

As mentioned in Section~\ref{sec1}, the problem concerning the inverse of the discrepancy is open even in the case of the ``classical'' star-discrepancy (that is, with respect to the uniform measure). As noted in Section~\ref{sec2} the upper bound~\eqref{hnwwb} remains valid if the uniform measure is replaced by an other measure on $[0,1]^d$; on the other hand, Hinrichs' proof of the lower bound~\eqref{hin} depends on the metric entropy of the system of axis-parallel boxes \emph{with respect to the uniform measure}, and can (as far as we understand) not easily be generalized to other measures. Thus many problems concerning the inverse of the discrepancy for the star-discrepancy $D_N^*$, the discrepancy $D_N^*(\cdot,\mu)$ and the discrepancy $D_N^{(\mathcal{C},P)}$ remain open. As noted after the statement of Theorem~\ref{th3}, it would also be interesting to get an improved version of this theorem with a significantly smaller value for the numerical constant, in the general case of the inverse of the 
discrepancy with respect to general measures and VC-classes.\\

As far as we know, Lemma~\ref{lemmabcgt} of Brandolini, Colzani, Gigante and Travaglini is the first Erd\H os-Tur\'an inequality of this general type (together with an inequality of Harman~\cite{harm}, which seems to be less useful for numerical integration). Generalizing and improving this inequality would be very interesting. In particular it would be helpful to get rid of the terms which are exponential in $d$, and which spoil a possible application of the point set from Corollary~\ref{co1} in order to obtain a version of  Theorem~\ref{th4} which is suitable for point sets which have moderate cardinality $N$ in comparison with $d$. Furthermore it would be nice to have a version of Lemma \ref{lemmabcgt} which is formulated for the discrepancy in \eqref{discmumod} rather than \eqref{discomega}; such a Koksma-Hlawka inequality would be more convenient in the case when $\Omega$ is fixed and has small volume, and could for example render the laborious technical argument in the proof of Theorem~\ref{th4} 
unnecessary.


\section{Proofs} \label{secproofs}
The proofs given here for Theorem~\ref{th1} and Theorem~\ref{th2} are similar to those given for~\eqref{bec} by Beck in~\cite{beck}, the only major differences being that we use Lemma~\ref{lemmahy} instead of Lemma~\ref{beckfiala}, and carefully take into account the dependence of all involved quantities on the dimension $d$.\\

As a consequence of Lemma~\ref{lemmahy}, we get the following lemma.
\begin{lemma} \label{lemmabeta}
For $(i_1, \dots, i_d) \in \{1, \dots, N\}^d$, let numbers $\beta_{(i_1, \dots, i_d)}$ from $[0,1]$ be given. Then there exist numbers $b_{(i_1, \dots, i_d)} \in \{0,1\}$ such that
$b_{(i_1, \dots, i_d)}=0$ whenever $\beta_{(i_1, \dots, i_d)}=0$, and such that for all $(J_1, \dots, J_d) \in \{1, \dots, N\}^d$ we have
$$
\left| \sum_{i_1=1}^{J_1} \dots \sum_{i_d=1}^{J_d} \left(b_{(i_1, \dots, i_d)} - \beta_{(i_1, \dots, i_d)} \right) \right| \leq 10 \sqrt{d} \left(2 + \log_2 N \right)^{(3d+1)/2}.
$$
\end{lemma}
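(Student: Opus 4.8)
The plan is to deduce this $d$-dimensional rounding statement from the one-dimensional Lemma~\ref{lemmahy}, applied to the hypergraph whose edges are \emph{dyadic boxes} --- this is essentially Beck's argument with Lemma~\ref{lemmahy} replacing Lemma~\ref{beckfiala}. First I would reduce to the case $N=2^K$, with $K=\lceil\log_2 N\rceil$ (so that $N\le 2^K\le 2N$): extend the array $(\beta_{(i_1,\dots,i_d)})$ to the index set $\{1,\dots,2^K\}^d$ by assigning the value $0$ to every new entry. By the last sentence of Lemma~\ref{lemmahy} the rounded array will vanish at all new entries, so the requirement ``$b=0$ whenever $\beta=0$'' becomes automatic, and the partial sums $\sum_{i_1\le J_1}\cdots\sum_{i_d\le J_d}$ with $J_j\le N$ are unaffected by the extension.

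Next, for $0\le\ell\le K$ let the level-$\ell$ dyadic intervals be the $2^{K-\ell}$ consecutive blocks of length $2^\ell$ that partition $\{1,\dots,2^K\}$, and let $\ed$ consist of all products $I^{(1)}\times\cdots\times I^{(d)}$ of such intervals. Each index lies in exactly one level-$\ell$ interval in each coordinate, hence in exactly $\Delta:=(K+1)^d$ members of $\ed$, and $\ed$ has cardinality $m=(2^{K+1}-1)^d<2^{d(K+1)}$. Applying Lemma~\ref{lemmahy} (with ground set $\{1,\dots,2^K\}^d$, maximum degree $\Delta$, and $m$ edges) then yields a $\{0,1\}$-array $(b_{(i_1,\dots,i_d)})$, vanishing wherever $\beta$ does, such that
$$\left|\sum_{(i_1,\dots,i_d)\in B}\bigl(b_{(i_1,\dots,i_d)}-\beta_{(i_1,\dots,i_d)}\bigr)\right|\le 5\sqrt{2\Delta\log(2m)}=:\delta$$
for every dyadic box $B\in\ed$.

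To pass from dyadic boxes to the corner boxes $\{1,\dots,J_1\}\times\cdots\times\{1,\dots,J_d\}$ occurring in the statement, I would use the elementary fact that $\{1,\dots,J\}$ is a disjoint union of at most $K+1$ dyadic intervals (one per nonzero binary digit of $J$, as $J\le 2^K$); distributing the product shows that each corner box is a disjoint union of at most $(K+1)^d=\Delta$ boxes from $\ed$, so its discrepancy is at most $\Delta\delta=5\sqrt{2}\,\Delta^{3/2}\sqrt{\log(2m)}$. It remains only to estimate this quantity: using $\log(2m)<(d(K+1)+1)\log 2\le 2d(K+1)\log 2$ and $\Delta^{3/2}=(K+1)^{3d/2}$ gives $\Delta\delta<10\sqrt{d\log 2}\,(K+1)^{(3d+1)/2}<10\sqrt d\,(2+\log_2 N)^{(3d+1)/2}$, where at the end $K+1=\lceil\log_2 N\rceil+1\le 2+\log_2 N$ and $\log 2<1$.

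I do not expect a genuine obstacle; the proof is essentially careful bookkeeping. The two points that need attention are (i) ensuring that the dyadic boxes produced by decomposing a corner box are genuinely edges of the hypergraph to which Lemma~\ref{lemmahy} was applied --- which is precisely why it is convenient to align everything to the grid $\{1,\dots,2^K\}$ --- and (ii) tracking the numerical constant so that the exponent of $(K+1)$ comes out as exactly $(3d+1)/2$. The latter works because the factor $\Delta=(K+1)^d$ from the corner-box decomposition, the factor $\sqrt\Delta=(K+1)^{d/2}$ hidden inside $\delta$, and the factor $(K+1)^{1/2}$ coming from the bound $\log(2m)<2d(K+1)\log 2$ multiply to $(K+1)^{(3d+1)/2}$, while the remaining constants combine to at most $10\sqrt d$.
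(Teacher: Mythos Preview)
Your proposal is correct and follows essentially the same route as the paper: reduce to a dyadic grid $\{1,\dots,2^K\}^d$, apply Lemma~\ref{lemmahy} to the hypergraph of dyadic product boxes (maximum degree $(K+1)^d$), and decompose each corner box into at most $(K+1)^d$ such boxes. The only cosmetic difference is that you count the edges exactly as $(2^{K+1}-1)^d$, whereas the paper uses the looser bound $(m+1)^d N^d$; both estimates feed into $\log(2m)$ and yield the same final constant $10\sqrt{d}\,(2+\log_2 N)^{(3d+1)/2}$.
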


\begin{proof}
For simplicity, we assume that $N = 2^m$ for some integer $m \geq 1$ (this assumption will be dropped at the end of the proof). We use a classical dyadic decomposition method. Let $m_1, \dots, m_d$ be non-negative integers such that $0 \leq m_1, \dots, m_d \leq m$, and let $\mathcal{E}(m_1, \dots, m_d)$ denote the class of all sets of the form
$$
\prod_{s=1}^d \{j_s 2^{m_s} +1, \dots, (j_s+1)2^{m_s} \},
$$ 
(where the product $\prod$ denotes the Cartesian product of sets), for  $0 \leq j_s < 2^{m-m_s}, ~1 \leq s \leq d$. Then for any fixed $m_1, \dots, m_d$ the class $\mathcal{E}(m_1, \dots, m_d)$ forms a partition of $\{1, \dots, N\}^d$, which implies that for fixed $m_1, \dots, m_d$ any $(i_1, \dots, i_d)$ appears in only one set of $\mathcal{E}(m_1, \dots, m_d)$. On the other hand, for fixed $m_1, \dots, m_d$ the class $\mathcal{E}(m_1, \dots, m_d)$ consists of at most $N^d$ elements. The number of possible values for $(m_1, \dots, m_d)$ is $\left(m + 1\right)^d$. Thus by Lemma~\ref{lemmahy} there exist numbers $b_{(i_1, \dots, i_d)} \in \{0,1\}$ such that for any set $E$ which is contained in $\mathcal{E}(m_1, \dots, m_d)$ for some $m_1, \dots, m_d$ we have
\begin{eqnarray*}
\left| \sum_{(i_1,\dots,i_d) \in E} \left( b_{(i_1, \dots, i_d)} - \beta_{(i_1, \dots, i_d)} \right) \right| & \leq & 5 \sqrt{2 (m+1)^d \log (2(m+1)^d N^d)} \\
& \leq & 10 \sqrt{d (m+1)^d \log (2N)},
\end{eqnarray*}
and such that additionally $b_{(i_1, \dots, i_d)}=0$ whenever $\beta_{(i_1, \dots, i_d)}=0$.\\

For arbitrary $J_1, \dots, J_d$ from $\{1, \dots, N\}$ the set
$$
\prod_{s=1}^d \{1, \dots, J_s\}
$$
can be written as a disjoint union of sets from 
$$
\bigcup_{0 \leq m_s \leq m ~\textrm{for $1\leq s \leq d$}} \mathcal{E}(m_1, \dots, m_d),
$$
in such a way that we take at most one set from $\mathcal{E}(m_1, \dots, m_d)$ for any possible $m_1, \dots, m_d$ (such a representation is easily found from the binary representation of the numbers $J_1, \dots, J_d$). Consequently we have
\begin{equation} \label{equ1}
\left| \sum_{(i_1,\dots,i_d) \in \prod_{s=1}^d \{1, \dots, J_s\}} \left( b_{(i_1, \dots, i_d)} - \beta_{(i_1, \dots, i_d)} \right) \right| \leq 10 \sqrt{d (m+1)^d \log (2N)} (m+1)^d
\end{equation}
for any $J_1, \dots, J_d$. Finally, assume that $N$ is not an integral power of $2$. Then we can set $\hat{N}$ for the smallest number exceeding $N$ which is an integral power of 2, and set
$$
b_{(i_1, \dots, i_d)} = 0 \quad \textrm{and} \quad \beta_{(i_1, \dots, i_d)} = 0 \qquad \textrm{for $(i_1, \dots, i_d) \in \{1, \dots, \hat{N} \}^d \big\backslash \{1, \dots, N \}^d$}.
$$
Then from~\eqref{equ1} we get 
\begin{eqnarray*}
& & \max_{\substack{J_1, \dots, J_d:\\1 \leq J_s \leq N~\textrm{for}~1 \leq s \leq d}}~ \left| \sum_{(i_1,\dots,i_d) \in \prod_{s=1}^d \{1, \dots, J_s\}} \left( b_{(i_1, \dots, i_d)} - \beta_{(i_1, \dots, i_d)} \right) \right| \\
& \leq & 10 \sqrt{d (\lceil \log_2 N \rceil+1)^d \log (4N)} (\lceil \log_2 N \rceil+1)^d \\
& \leq & 10 \sqrt{d} \left(2 + \log_2 N \right)^{(3d+1)/2},
\end{eqnarray*}
which proves Lemma~\ref{lemmabeta}.
\end{proof}

\begin{lemma} \label{lemma2} \label{lemmath1}
Given any (not necessarily distinct) points $z_1, \dots, z_K$ in $[0,1]^d$ and any number $N$ satisfying $N \leq \sqrt{K}$ we can find an $N$-element subset $\{x_1, \dots, x_N\}$ of $\{z_1, \dots, z_K\}$ such that for any axis-parallel box $A \subset [0,1]^d$ which has one vertex at the origin we have
$$
\left| \sum_{n=1}^N \mathds{1}_A (x_n) - \frac{N}{K} \sum_{n=1}^K \mathds{1}_A (z_n) \right| \leq 60 \sqrt{d} \left(2 + \log_2 N \right)^{(3d+1)/2} + 4d + 6.
$$
\end{lemma}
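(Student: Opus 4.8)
The plan is to reduce the problem to the combinatorial statement of Lemma~\ref{lemmabeta} on a grid of side length exactly $N$, by replacing the points $z_1,\dots,z_K$ with suitably coarsened rank coordinates. For each coordinate $s$ order the values $z_{1,s},\dots,z_{K,s}$ (breaking ties by a fixed auxiliary ordering of the indices) and — assuming first that $N$ divides $K$ — split the indices into $N$ consecutive blocks of size $K/N$; this assigns to each $n$ a coarse rank vector $r_n\in\{1,\dots,N\}^d$ and defines cells $Q_I=\{n:r_n=I\}$ for $I\in\{1,\dots,N\}^d$. Two features of this construction are decisive. First, for every axis-parallel box $A=\prod_s[0,a_s)$ anchored at the origin the condition $z_n\in A$ is, in each coordinate separately, monotone in the fine rank, so $A$ is sandwiched between a largest grid-aligned box $A^-$ and a smallest grid-aligned box $A^+$ (both unions of cells $Q_I$) with $A^-\subseteq A\subseteq A^+$, and the set of cells in $A^+\setminus A^-$ is contained in the union of the $d$ ``coordinate shells'' $\{I:I_s=J_s+1\}$ for appropriate thresholds $J_s$. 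Second, each cell $Q_I$ contains at most $K/N$ of the points, since $Q_I$ sits inside a single coordinate-$1$ block.

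Set $c_I=\#Q_I$ and $\beta_I=\frac{N}{K}c_I$. The bound $c_I\le K/N$ forces $\beta_I\in[0,1]$, and $\sum_I\beta_I=N$. By Lemma~\ref{lemmabeta} (with grid side length $N$) there are $b_I\in\{0,1\}$ with $b_I=0$ whenever $c_I=0$ and, writing $I\le J$ for ``$I_s\le J_s$ for all $s$'',
$$
\Bigl| \sum_{I \le J} \bigl(b_I - \beta_I\bigr) \Bigr| \ \le\ E := 10\sqrt{d}\,\bigl(2 + \log_2 N\bigr)^{(3d+1)/2}
$$
for all $J\in\{1,\dots,N\}^d$. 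Now pick one point from each cell with $b_I=1$. This selects $\sum_I b_I\in[N-E,N+E]$ points (take $J=(N,\dots,N)$ above), and for every grid-aligned box $B$ the number of selected points in $B$ equals $\sum_{Q_I\subseteq B}b_I$, which differs from $\frac{N}{K}\#\{n:z_n\in B\}=\sum_{Q_I\subseteq B}\beta_I$ by at most $E$. Finally pass to an exactly $N$-element set by deleting or adjoining at most $E$ further points of $\{z_1,\dots,z_K\}$ (enough spare points remain, since $K\ge N^2$); as each such change alters the count of any given box by at most $1$, the resulting set $\{x_1,\dots,x_N\}$ still satisfies $\bigl|\#\{n:x_n\in B\}-\frac{N}{K}\#\{n:z_n\in B\}\bigr|\le 2E$ for every grid-aligned box $B$.

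For a general origin-anchored box $A$ one uses the sandwich $A^-\subseteq A\subseteq A^+$:
$$
\#\{n:x_n\in A\}-\tfrac{N}{K}\#\{n:z_n\in A\}\ \le\ \Bigl(\#\{n:x_n\in A^+\}-\tfrac{N}{K}\#\{n:z_n\in A^+\}\Bigr)+\tfrac{N}{K}\#\{n:z_n\in A^+\setminus A^-\},
$$
with the symmetric inequality for the lower bound. The first term is at most $2E$ by the previous paragraph, and since each of the $d$ shells making up $A^+\setminus A^-$ contains at most $K/N$ of the $z_n$, the second term is at most $d$; this gives the bound $2E+d$. Reinstating the case $N\nmid K$ (discard the fewer than $N$ surplus indices and compare $\frac{N}{K}$ with $\frac{N}{K'}$ for the truncated count $K'$, an $O(1)$ error precisely because $N^2\le K$) and allowing for the slack in the constants then yields the stated bound $60\sqrt{d}\,(2+\log_2 N)^{(3d+1)/2}+4d+6$.

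The whole conceptual input sits in the first two paragraphs: an \emph{adaptive} grid of side length $N$ already brings the cell masses $\beta_I$ into $[0,1]$, so that Lemma~\ref{lemmabeta} can be invoked with the logarithmic power $\log_2 N$ rather than $\log_2 K$. The step most prone to error — and the only one that genuinely uses $N\le\sqrt K$ — is the separate accounting of the lower-order terms: the boundary shell $A^+\setminus A^-$, the correction to an exactly $N$-element set, and the non-divisibility of $K$ by $N$ each produce $O(d)$ or $O(1)$ contributions that must be kept isolated from the main $\log$-power term, and one has to check that sufficiently many points of $\{z_1,\dots,z_K\}$ remain available at every stage.
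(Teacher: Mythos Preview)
Your proof is correct and follows essentially the same approach as the paper's: order the $z_n$ by each coordinate, split into $N$ blocks per coordinate to form an adaptive grid of side $N$, apply Lemma~\ref{lemmabeta} to the normalized cell counts, select one point per marked cell, correct to exactly $N$ points, and sandwich a general anchored box between two grid-aligned boxes whose difference is a union of $d$ coordinate shells of mass $\le K/N$ each. The only cosmetic differences are that the paper normalizes by $\frac{N}{K+N}$ rather than $\frac{N}{K}$ (which absorbs the non-divisibility of $K$ by $N$ directly instead of via your truncation to $K'$), and that the paper's bookkeeping of the sandwich step is slightly less tight, arriving at $6E+4d+6$ where your argument gives $2E+d+O(1)$; both are within the stated bound.
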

(Remark: Since we do not assume that the points $z_1, \dots, z_K$ are distinct, the sets in the statement and in the proof of this lemma are strictly speaking multisets. However, to keep the presentation short, we simply neglect this minor technicality.)

\begin{proof}
We can rearrange the points $z_1, \dots, z_K$ in $d$ different ways in increasing order, according to their $d$ coordinates. More precisely, for each $s \in \{1, \dots, d\}$ we define $z^{(s)}_1, \dots, z^{(s)}_K$ in such a way that
$$
\left\{ z^{(s)}_1, \dots, z^{(s)}_K \right\} = \{z_1, \dots, z_K\}
$$
and such that the $s$-th coordinate of $z^{(s)}_m$ does not exceed the $s$-th coordinate of $z^{(s)}_n$, for $m \leq n$.\\

For $1 \leq s \leq d$ and $1 \leq i \leq N$ we set
$$
F_i^{(s)} = \left\{ z^{(s)}_n:~\left\lfloor (i-1) \frac{K}{N} \right\rfloor < n \leq \left\lfloor i \frac{K}{N} \right\rfloor \right\},
$$
and, for $(i_1, \dots, i_d) \in \{1, \dots, N\}^d$,
$$
\beta_{(i_1, \dots, i_d)} = \frac{N}{K+N} ~ \# \left\{n:~z_n \in \bigcap_{s=1}^d F_{i_s}^{(s)} \right\}.
$$
By Lemma~\ref{lemmabeta} there exist numbers $b_{(i_1, \dots, i_d)} \in \{0,1\}$ such that for all $(J_1, \dots, J_d) \in \{1,\dots,N\}^d$ we have
$$
\left| \sum_{i_1=1}^{J_1} \dots \sum_{i_d=1}^{J_d} \left(b_{(i_1, \dots, i_d)} - \beta_{(i_1, \dots, i_d)} \right) \right| \leq 10 \sqrt{d} \left(2 + \log_2 N \right)^{(3d+1)/2},
$$
and such that $b_{(i_1, \dots, i_d)}=0$ whenever $\beta_{(i_1, \dots, i_d)}=0$. Now we select one point $x_{(i_1, \dots, i_d)}$ from $\prod_{s=1}^d F_{i_s}^{(s)}$ whenever $b_{(i_1, \dots, i_d)}=1$, and write $\mathcal{P}$ for the set consisting of all such points.\\

Let $J_1, \dots, J_d \in \{1, \dots, N\}$ be fixed, and set 
$$
G=G_{(J_1, \dots, J_d)}=\bigcup_{1 \leq i_1 \leq J_1} \dots \bigcup_{1 \leq i_d \leq J_d} \left( \bigcap_{s=1}^d F_{i_s}^{(s)} \right).
$$
Then we have
\begin{eqnarray}
& & \left| \# (\mathcal{P} \cap G) - \frac{N}{K} ~\# (G) \right| \nonumber\\
& \leq & \left| \# (\mathcal{P} \cap G) - \frac{N}{K+N} ~\# (G) \right| + \underbrace{\left| \left( \frac{N}{K} - \frac{N}{K+N} \right) ~\# (G) \right|}_{\leq N^2/(K+N) \leq 1} \nonumber\\
& \leq & \left| \sum_{i_1=1}^{J_1} \dots \sum_{i_d=1}^{J_d} \left(b_{(i_1, \dots, i_d)} - \beta_{(i_1, \dots, i_d)} \right) \right| + 1 \nonumber\\
& \leq & 10 \sqrt{d} \left(2 + \log_2 N \right)^{(3d+1)/2} + 1. \label{equ2a}
\end{eqnarray}

In particular, by choosing $J_1 = \dots = J_d = N$, we see that the cardinality of $\mathcal{P}$ differs from $N$ by at most $10 \sqrt{d} \left(2 + \log_2 N \right)^{(3d+1)/2} + 1$. Consequently, there exists an $N$-element subset $\mathcal{Q}$ of $\{z_1, \dots, z_K\}$ such that 
$$
|\#(\mathcal{P}) - \#(\mathcal{Q})| \leq 10 \sqrt{d} \left(2 + \log_2 N \right)^{(3d+1)/2} + 1,
$$ 
and by~\eqref{equ2a} we have
\begin{equation}\label{(5)}
\left| \# (\mathcal{Q} \cap G) - \frac{N}{K} ~\# (G) \right| \leq 20 \sqrt{d} \left(2 + \log_2 N \right)^{(3d+1)/2} + 2
\end{equation}
for all $G=G(J_1, \dots, J_d)$. In the sequel we will show that the point set $\mathcal{Q}$ satisfies the conclusion of Lemma~\ref{lemmath1}.\\

Let $(a_1, \dots, a_d) \in [0,1]^d$ be given, and let $A$ denote the axis-parallel box $\prod_{s=1}^d [0,a_s]$. Then there exist $J_1, \dots, J_d \in \{1, \dots, N-1\}$ such that
$$
G(J_1, \dots, J_d) \subset \{z_n:~z_n \in A\} \subset G(J_1+1, \dots, J_d+1).
$$
Furthermore, since  $\# \left( G(J_1+1, \dots, J_d+1) \backslash G(J_1, \dots, J_d) \right) \leq d \left(\frac{K}{N}+1\right) \leq 2dK/N$ we have
\begin{equation} \label{5b}
\# \big( \left\{z_n:~z_n \in A\right\} \backslash G(J_1, \dots, J_d) \big) \leq \frac{2dK}{N},
\end{equation}
and consequently, by~\eqref{(5)},
\begin{eqnarray}
& & \left| \# (\mathcal{Q} \cap A) - \# (\mathcal{Q} \cap G(J_1, \dots, J_d)) \right| \nonumber\\
& \leq & \left| \# (\mathcal{Q} \cap G(J_1+1, \dots, J_d+1)) - \# (\mathcal{Q} \cap G(J_1, \dots, J_d)) \right| \nonumber\\
& \leq & \left| \frac{N}{K} \Big( \# (G(J_1+1, \dots, J_d+1)) - \# (G(J_1, \dots, J_d)) \Big) \right| \\
& & \quad + 40 \sqrt{d} \left(2 + \log_2 N \right)^{(3d+1)/2} + 4 \nonumber\\
& \leq & 2d +  40 \sqrt{d} \left(2 + \log_2 N \right)^{(3d+1)/2} + 4.  \label{5a}
\end{eqnarray}
Thus, by~\eqref{(5)},~\eqref{5b} and~\eqref{5a} we have
\begin{eqnarray*}
& & \left| \sum_{n=1}^N \mathds{1}_A (x_n) - \frac{N}{K} \sum_{n=1}^K \mathds{1}_A (z_n) \right| \\
& = & \left| \# (\mathcal{Q} \cap A) - \frac{N}{K} ~\# \left\{n:~z_n \in A \right\} \right| \\
& \leq & \Big| \# (\mathcal{Q} \cap A) - \# (\mathcal{Q} \cap G(J_1, \dots, J_d)) \Big|  + \left| \# (\mathcal{Q} \cap G(J_1, \dots, J_d)) - \frac{N}{K} \# (G(J_1, \dots, J_d)) \right| \\
& & + \left| \frac{N}{K} \Big(\# (G(J_1, \dots, J_d)) -  \# \left\{n:~z_n \in A\right\} \Big) \right| \\
& \leq & 2d + 40 \sqrt{d} \left(2 + \log_2 N \right)^{(3d+1)/2} + 4 + 20 \sqrt{d} \left(2 + \log_2 N \right)^{(3d+1)/2} + 2 + 2d \\
& \leq & 60 \sqrt{d} \left(2 + \log_2 N \right)^{(3d+1)/2} + 4d + 6.
\end{eqnarray*}
This proves Lemma~\ref{lemmath1}.
\end{proof}

\begin{proof}[Proof of Theorem~\ref{th1}]
All that is left for the proof of Theorem~\ref{th1} is to show that we can closely approximate the measure $\mu$ by a discrete measure of the form $\frac{1}{K} \sum_{n=1}^K \delta_{z_n}$ for some appropriate points $z_1, \dots, z_K$. Such points exist by Corollary~\ref{co1}: For given $N$ we can choose $K = 2^{26} d N^2$ points $z_1, \dots, z_K \in [0,1]^d$ such that
\begin{equation} \label{ratp}
\sup_{A \subset[0,1]^d} \left| \frac{1}{K} \sum_{n=1}^K \mathds{1}_A (z_n) - \mu(A) \right| \leq \frac{1}{N},
\end{equation}
where the supremum is extended over all axis-parallel boxes which have one vertex at the origin. Then by Lemma~\ref{lemmath1} there exist points $x_1, \dots, x_N$ such that
\begin{eqnarray*}
\sup_{A \subset [0,1]^d} \left| \frac{1}{N} \sum_{n=1}^N \mathds{1}_A (x_n) - \mu(A) \right| & \leq & \sup_{A \subset [0,1]^d} \left| \frac{1}{N} \sum_{n=1}^N \mathds{1}_A (x_n) - \frac{1}{K} \sum_{n=1}^K \mathds{1}_A (z_n) \right| + \frac{1}{N} \\
& \leq & \frac{60 \sqrt{d} \left(2 + \log_2 N \right)^{(3d+1)/2} + 4d + 7}{N}\\
& \leq & \frac{63\sqrt{d}\left(2 + \log_2 N \right)^{(3d+1)/2}}{N}.
\end{eqnarray*}
This proves Theorem~\ref{th1}.
\end{proof}

\begin{proof}[Proof of Theorem~\ref{th2}]
Theorem~\ref{th2} follows from Theorem~\ref{th1}. To show that this actually is the case, we use a classical method of constructing finite point sets in the $d+1$-dimensional unit cube in order to obtain a $d$-dimensional infinite sequence. Let $\mu$ be the given measure on $[0,1]^d$, and set $\nu= \mu \times \lambda$, where $\lambda$ is the 1-dimensional Lebesgue measure. Furthermore, for $i \geq 1$ we set 
$$
N_i=2^{(2^i-2)} \qquad \textrm{and} \qquad M_i = \left\{ \begin{array}{ll} 0 & \textrm{for $i=1$} \\ \displaystyle\sum_{l=1}^{i-1} N_l & \textrm{for $i \geq 2$.} \end{array} \right.
$$
By Theorem~\ref{th1} for any $i \geq 1$ there exists an $N_i$-element point set $\mathcal{Q}_i \subset [0,1]^{d+1}$ whose star-discrepancy (with respect to $\nu$) is bounded by
$$
\frac{63 \sqrt{d+1} \left(2 + \log_2 N_i \right)^{(3d+4)/2}}{N_i}.
$$
We write
$$
x^{(i)}_{1}, \dots, x^{(i)}_{N_i}
$$
for the elements of $\mathcal{Q}_i$, rearranged in increasing order according to their last coordinate. Since the order of the points does not influence the discrepancy, we have
\begin{equation} \label{s19e}
D_{N_i}^* (x^{(i)}_{1}, \dots, x^{(i)}_{N_i}; \nu) \leq \frac{63 \sqrt{d+1} \left(2 + \log_2 N_i \right)^{(3d+4)/2}}{N_i}.
\end{equation}
We write $y^{(i)}_n$ for the last coordinate of $x^{(i)}_{n}$. A simple approximation argument shows that we can assume that $y^{(i)}_n$ is strictly increasing for $1 \leq n \leq N_i$.\\

Now let $n \geq 1$ be given. There exists a number $i$ such that $n=M_i + j$ for some $j \in \{1,\dots,N_i\}$. We set
$$
x_n = \bar{x}^{(i)}_j,
$$
where $\bar{x}^{(i)}_j$ is the projection of $x^{(i)}_j$ onto $[0,1]^d$ by omitting its last coordinate. Now let $A$ be any axis-parallel box in $[0,1]^d$ which has one vertex at the origin, and let $N \geq 1$. There exists some $i$ such that $N=M_i + j$ for some $j \in \{1, \dots, N_i\}$. Note that in this case $\log_2 N \geq 2^{i-1}-2$, and consequently $4 + 2 \log_2 N \geq 2^i$. Then by the triangle inequality we have
\begin{eqnarray}
& & \left| \# \{n \leq N:~x_n \in A \} - N \mu (A) \right|  \label{s19a} \\
& \leq & \left| \sum_{l=1}^{i-1} ~\# \{1 \leq n \leq N_l:~ x^{(l)}_n \in A \times [0,1] \} - N_l \nu (A \times [0,1]) \right| \nonumber\\
& & + \left| \# \left\{1 \leq n \leq j:~x^{(i)}_n \in A \times \left[0,y^{(i)}_j\right] \right\} - N_i \nu \left(A \times \left[0,y^{(i)}_j\right] \right) \right| \label{s19b} \\
& & + \left| N_i  \nu \left(A \times \left[0,y^{(i)}_j\right] \right) - j \mu(A) \right| \label{s19c}.
\end{eqnarray}
The terms in lines~\eqref{s19b} and~\eqref{s19c} are each bounded by $N_i D_{N_i}^* (x^{(i)}_{1}, \dots, x^{(i)}_{N_i}; \nu) $. Thus by~\eqref{s19e} we conclude that~\eqref{s19a} is bounded by
\begin{eqnarray*}
& & \left( \sum_{l=1}^{i-1} 63\sqrt{d+1} \left(2 + \log_2 N_l \right)^{(3d+4)/2} \right) + 126 \sqrt{d+1} \left(2 + \log_2 j \right)^{(3d+4)/2}\\
& \leq & 126 \sqrt{d+1} \left( \underbrace{  \frac{1}{2} \left(\sum_{l=1}^{i-1} (2^{l})^{(3d+4)/2} \right)}_{\leq  \frac{(2^{i})^{(3d+4)/2}}{20}} +  (2^{i})^{(3d+4)/2} \right) \\
& \leq & 133 \sqrt{d+1}  (2^{i})^{(3d+4)/2} \\
& \leq & 133 \sqrt{d+1} (4 + 2 \log_2 N)^{(3d+4)/2}.
\end{eqnarray*}
Thus we have
\begin{eqnarray*}
D_N^*(x_1, \dots, x_N; \mu) \leq 133 \sqrt{d+1} \frac{(4 + 2 \log_2 N)^{(3d+4)/2}}{N},
\end{eqnarray*}
which proves Theorem~\ref{th2}.
\end{proof}

\begin{proof}[Proof of Theorem~\ref{th3}]
Theorem~\ref{th3} follows easily from Lemma~\ref{lemmaalex}. In fact, using Lemma~\ref{lemmaalex} for $t= 2^{13} \sqrt{d}$, we immediately see that condition~\eqref{alexcond2} is satisfied. On the other hand, for our choice of $t$ condition~\eqref{alexcond1} is satisfied if
$$
2^{13} \sqrt{d} > \frac{2^{33/2} d}{\sqrt{N}} \log\left(\max\left(\frac{N}{2d},e\right)\right),
$$
which is equivalent to
\begin{equation} \label{equh}
\frac{\sqrt{N}}{\sqrt{d}} > 2^{7/2} \log\left(\max\left(\frac{N}{2d},e\right)\right).
\end{equation}
Equation~\eqref{equh} holds whenever $\sqrt{N} \geq 96 \sqrt{d}$. Thus, using Lemma~\ref{lemmaalex} under the additional assumption that $\sqrt{N}\geq 96 \sqrt{d}$ we get
$$
\p \left( \sup_{C \in \mathcal{C}} |\alpha_N(C)| > 2^{13} \sqrt{d} \right) \leq 16 e^{-2^{26} d} < 1,
$$
which proves the existence of points $t_1, \dots, t_N \in X$ for which
$$
D_N^{(\mathcal{C},P)}(t_1, \dots, t_N) \leq 2^{13} \frac{\sqrt{d}}{\sqrt{N}}.
$$
On the other hand in the case $\sqrt{N} < 96 \sqrt{d}$ the conclusion of Theorem~\ref{th3} holds trivially, since $D_N^{(\mathcal{C},P)}$ is always bounded by 1.
\end{proof}

\begin{proof}[Proof of Theorem~\ref{th4}]
In the case $\Omega=[0,1]^d$ the two discrepancies in~\eqref{discmu} and~\eqref{discomega} coincide, up to an additional multiplicative factor $2^d$ in~\eqref{discomega}, and Theorem~\ref{th4} follows directly from the combination of Theorem~\ref{th1} and Lemma~\ref{lemmabcgt}.\\

Now assume that $\Omega \neq [0,1]^d$. Thus there exists a point $P \in [0,1]^d$ which is \emph{not} contained in $\Omega$. We define the measure $\mu$ as in~\eqref{mua}, that is 
\begin{equation*}
\mu(A) = \frac{\lambda(\Omega \cap A)}{\lambda(\Omega)}
\end{equation*}
for every Borel subset $A$ of $[0,1]^d$. Since by assumption $\lambda(\Omega)>0$, the measure $\mu$ is a non-negative, normalized Borel measure on $[0,1]^d$. Let $\mathcal{B}$ denote the class of Borel subsets of $[0,1]^d$, and define a class $\mathcal{A}$ by
$$
\mathcal{A} = \{ B \cap \Omega:~B \in \mathcal{B} \}.
$$ 
Then the pair $(\Omega,\mathcal{A})$ forms a measurable space. Since $\Omega$ is a measurable subset of $[0,1]^d$, the restriction of $\mu$ to $(\Omega,\mathcal{A})$ is also a measure; with a slight abuse of notation we will denote this measure also by $\mu$. By definition $\mu(\Omega)=1$, and thus $(\Omega,\mathcal{A},\mu)$ is a probability space.\\

As mentioned in Section~\ref{sec2}, it is well-known that the class of axis-parallel boxes which are contained in $[0,1]^d$ and have vertex at the origin is a VC-class of dimension $d$. Let $\mathcal{C}$ denote the class of all sets which can be obtained by intersecting such an axis-parallel box with $\Omega$. It is an easy exercise to check that then the class $\mathcal{C}$ is also a VC-class, of dimension at most $d$. Applying Theorem~\ref{th3} to the probability space $(\Omega,\mathcal{A},\mu)$ and the class $\mathcal{C}$ this implies that there exist points $z_1, \dots, z_K \in \Omega$ for which
$$
D_N^*(z_1, \dots, z_K;\mu) = D_N^{(\mathcal{C},\mu)} (z_1, \dots, z_K) \leq 2^{13} \frac{\sqrt{d}}{\sqrt{N}}
$$
(the class $\mathcal{C}$ is not countable, but this problem can be overcome in the way described before the statement of Corollary~\ref{co1}). It is important that in this approximation \emph{the points $z_1, \dots, z_K$ are elements of $\Omega$}. Taking $K = 2^{26} d N^2$ we get the points $z_1, \dots, z_K$ for the proof of Theorem~\ref{th1}, which satisfy~\eqref{ratp}. The proof of Lemma~\ref{lemma2} reveals that the points $x_1, \dots, x_N$ in the conclusion of Theorem~\ref{th1} are chosen from $z_1, \dots, z_K$. Consequently, since in the present case we have $z_1, \dots, z_K \in \Omega$, in this case the points from the conclusion of Theorem~\ref{th1} also satisfy $x_1, \dots, x_N \in \Omega$.\\

For these points we have
$$
D_N^*(x_1, \dots, x_N;\mu) \leq \frac{63 \sqrt{d} \left(2 + \log_2 N \right)^{(3d+1)/2}}{N}.
$$
Let 
$$
M = \left\lceil \frac{N}{\lambda(\Omega)} \right\rceil;
$$
then we have
\begin{equation} \label{emm}
\left| \frac{1}{M} - \frac{\lambda(\Omega)}{N}\right| \leq \frac{\lambda(\Omega)^2}{N^2} \leq \frac{\lambda(\Omega)}{N^2}.
\end{equation}
We define points 
$$
x_{N+1} = \dots =  x_{M} = P
$$
(where $P$ is the point for which $P \not\in \Omega$). Note that for the discrepancy defined in~\eqref{discomega} the supremum is taken over all axis-parallel boxes contained in $[0,1]^d$, without the additional assumption that one corner of each such box has be at the origin as in the definition of the discrepancy in~\eqref{discmu}. Restricting the supremum to only those axis-parallel boxes which have a vertex at the origin will add a multiplicative factor $2^d$. Thus, using~\eqref{emm}, we get
\begin{eqnarray*}
D_M^{(\Omega)}(x_1, \dots, x_M) & \leq & 4^d \sup_{A \subset [0,1]^d} \left| \frac{1}{M} \sum_{n=1}^M \mathds{1}_{\Omega \cap A} (x_n) - \lambda(\Omega \cap A) \right| \\
& = & 4^d \sup_{A \subset [0,1]^d} \left| \frac{1}{M} \sum_{n=1}^N \mathds{1}_{A} (x_n) - \mu(A) \lambda(\Omega)  \right| \\
& \leq & 4^d \left(\frac{\lambda(\Omega)}{N} + \sup_{A \subset [0,1]^d} \left| \frac{\lambda(\Omega)}{N} \sum_{n=1}^N \mathds{1}_{A} (x_n) - \mu(A) \lambda(\Omega)  \right| \right) \\
& = & 4^d \lambda(\Omega)  \left(\frac{1}{N} + D_N^*(x_1, \dots, x_N; \mu) \right) \\
& \leq & 4^d \lambda(\Omega)~\frac{63 \sqrt{d} \left(2 + \log_2 N \right)^{(3d+1)/2} + 1}{N},
\end{eqnarray*}
where the supremum is always taken over those axis-parallel boxes $A$ which have a vertex at the origin. Now by Lemma~\ref{lemmabcgt} we have, using again~\eqref{emm},
\begin{eqnarray*}
& & \left| \frac{1}{N} \sum_{n=1}^N f (x_n) - \frac{1}{\lambda(\Omega)} \int_\Omega f(x)~d\lambda(x) \right| \\
& \leq & \frac{1}{\lambda(\Omega)} \left| \frac{1}{M} \sum_{n=1}^M (f \cdot \mathds{1}_\Omega) (x_n) - \int_\Omega f(x)~d\lambda(x) \right| + \frac{\|f\|_\infty}{N} \\
& \leq & \frac{1}{\lambda(\Omega)} D_M^{(\Omega)}(x_1, \dots, x_M) \cdot V(f) + \frac{\|f\|_\infty}{N} \\
& \leq & 4^d ~\frac{63 \sqrt{d} \left(2 + \log_2 N \right)^{(3d+1)/2} + 1}{N} + \frac{\|f\|_\infty}{N} ,
\end{eqnarray*}
which proves Theorem~\ref{th4}.
\end{proof}

 \section*{Acknowledgment}

We want to thank Richard Nickl for several helpful remarks concerning empirical processes indexed by sets and the metric entropy of VC classes, and Benjamin Doerr for valuable explanations concerning the discrepancies of hypergraphs and matrices.


\begin{thebibliography}{10}

\bibitem{aist}
C.~Aistleitner.
\newblock Covering numbers, dyadic chaining and discrepancy.
\newblock {\em J. Complexity}, 27(6):531--540, 2011.

\bibitem{aistho}
C.~Aistleitner and M.~Hofer.
\newblock Probabilistic discrepancy bounds for {M}onte {C}arlo point sets.
\newblock Math. Comp., to appear.

\bibitem{alex}
K.~S. Alexander.
\newblock Probability inequalities for empirical processes and a law of the
  iterated logarithm.
\newblock {\em Ann. Probab.}, 12(4):1041--1067, 1984.

\bibitem{bana}
W.~Banaszczyk.
\newblock Balancing vectors and {G}aussian measures of {$n$}-dimensional convex
  bodies.
\newblock {\em Random Structures Algorithms}, 12(4):351--360, 1998.

\bibitem{beck}
J.~Beck.
\newblock Some upper bounds in the theory of irregularities of distribution.
\newblock {\em Acta Arith.}, 43(2):115--130, 1984.

\bibitem{bsi}
J.~Beck and J.~Spencer.
\newblock Integral approximation sequences.
\newblock {\em Math. Programming}, 30(1):88--98, 1984.

\bibitem{bor}
D.~Bilyk.
\newblock On {R}oth's orthogonal function method in discrepancy theory.
\newblock {\em Unif. Distrib. Theory}, 6(1):143--184, 2011.

\bibitem{blv}
D.~Bilyk, M.~T. Lacey, and A.~Vagharshakyan.
\newblock On the small ball inequality in all dimensions.
\newblock {\em J. Funct. Anal.}, 254(9):2470--2502, 2008.

\bibitem{bcgt}
L.~Brandolini, L.~Colzani, G.~Gigante, and G.~Travaglini.
\newblock On the {K}oksma-{H}lawka inequality.
\newblock {\em J. Complexity}, 29(2):158--172, 2013.

\bibitem{cha}
B.~Chazelle.
\newblock {\em The discrepancy method}.
\newblock Cambridge University Press, Cambridge, 2000.

\bibitem{dgla}
L.~Devroye, L.~Gy{\"o}rfi, and G.~Lugosi.
\newblock {\em A probabilistic theory of pattern recognition}, volume~31 of
  {\em Applications of Mathematics (New York)}.
\newblock Springer-Verlag, New York, 1996.

\bibitem{dksh}
J.~Dick, F.~Y. Kuo, and I.~H. Sloan.
\newblock High-dimensional integration: The quasi-{M}onte {C}arlo way.
\newblock {\em Acta Numerica}, 22:133--288, 2013.

\bibitem{dpd}
J.~Dick and F.~Pillichshammer.
\newblock {\em Digital nets and sequences. Discrepancy theory and quasi-{M}onte
  {C}arlo integration.}
\newblock Cambridge University Press, Cambridge, 2010.

\bibitem{doerr}
B.~Doerr.
\newblock A lower bound for the discrepancy of a random point set.
\newblock {\em J. Complexity}.
\newblock To appear. Available at \url{http://arxiv.org/abs/1210.0572}.

\bibitem{dts}
M.~Drmota and R.~F. Tichy.
\newblock {\em Sequences, discrepancies and applications}, volume 1651 of {\em
  Lecture Notes in Mathematics}.
\newblock Springer-Verlag, Berlin, 1997.

\bibitem{gna}
E.~Gin{\'e} and R.~Nickl.
\newblock An exponential inequality for the distribution function of the kernel
  density estimator, with applications to adaptive estimation.
\newblock {\em Probab. Theory Related Fields}, 143(3-4):569--596, 2009.

\bibitem{glass}
P.~Glasserman.
\newblock {\em Monte {C}arlo methods in financial engineering}, volume~53 of
  {\em Applications of Mathematics (New York)}.
\newblock Springer-Verlag, New York, 2004.

\bibitem{harm}
G.~Harman.
\newblock Variations on the {K}oksma-{H}lawka inequality.
\newblock {\em Unif. Distrib. Theory}, 5(1):65--78, 2010.

\bibitem{hau}
D.~Haussler.
\newblock Sphere packing numbers for subsets of the {B}oolean {$n$}-cube with
  bounded {V}apnik-{C}hervonenkis dimension.
\newblock {\em J. Combin. Theory Ser. A}, 69(2):217--232, 1995.

\bibitem{hnww}
S.~Heinrich, E.~Novak, G.~W. Wasilkowski, and H.~Wo{\'z}niakowski.
\newblock The inverse of the star-discrepancy depends linearly on the
  dimension.
\newblock {\em Acta Arith.}, 96(3):279--302, 2001.

\bibitem{hinr}
A.~Hinrichs.
\newblock Covering numbers, {V}apnik-\v {C}ervonenkis classes and bounds for
  the star-discrepancy.
\newblock {\em J. Complexity}, 20(4):477--483, 2004.

\bibitem{knu}
L.~Kuipers and H.~Niederreiter.
\newblock {\em Uniform distribution of sequences}.
\newblock Wiley-Interscience [John Wiley \& Sons], New York, 1974.

\bibitem{lsv}
L.~Lov{\'a}sz, J.~Spencer, and K.~Vesztergombi.
\newblock Discrepancy of set-systems and matrices.
\newblock {\em European J. Combin.}, 7(2):151--160, 1986.

\bibitem{mass}
P.~Massart.
\newblock About the constants in {T}alagrand's concentration inequalities for
  empirical processes.
\newblock {\em Ann. Probab.}, 28(2):863--884, 2000.

\bibitem{mat}
J.~Matou{\v{s}}ek.
\newblock {\em Geometric discrepancy}, volume~18 of {\em Algorithms and
  Combinatorics}.
\newblock Springer-Verlag, Berlin, 2010.
\newblock An illustrated guide, Revised paperback reprint of the 1999 original.

\bibitem{sau}
N.~Sauer.
\newblock On the density of families of sets.
\newblock {\em J. Combinatorial Theory Ser. A}, 13:145--147, 1972.

\bibitem{tala}
M.~Talagrand.
\newblock Sharper bounds for {G}aussian and empirical processes.
\newblock {\em Ann. Probab.}, 22(1):28--76, 1994.

\bibitem{tala2}
M.~Talagrand.
\newblock Concentration of measure and isoperimetric inequalities in product
  spaces.
\newblock {\em Inst. Hautes \'Etudes Sci. Publ. Math.}, 81:73--205, 1995.

\bibitem{vc}
V.~N. Vapnik and A.~J. {\v{C}}ervonenkis.
\newblock The uniform convergence of frequencies of the appearance of events to
  their probabilities.
\newblock {\em Teor. Verojatnost. i Primenen.}, 16:264--279, 1971.

\bibitem{vad}
N.~Vayatis and R.~Azencott.
\newblock Distribution-dependent {V}apnik-{C}hervonenkis bounds.
\newblock In {\em Computational learning theory ({N}ordkirchen, 1999)}, volume
  1572, pages 230--240. Springer, Berlin, 1999.

\bibitem{wangtan1}
X.~Wang and K.~S. Tan.
\newblock How do path generation methods affect the accuracy of quasi-{M}onte
  {C}arlo methods for problems in finance?
\newblock {\em J. Complexity}, 28(2):250--277, 2012.

\bibitem{wangtan2}
X.~Wang and K.~S. Tan.
\newblock Pricing and hedging with discontinuous functions: quasi-{M}onte {C}arlo
  methods and dimension reduction.
\newblock {\em Management Science}, 59(2):376--389, 2013.

\end{thebibliography}

\end{document}